\def\al{\alpha}
\def\be{\beta}
\def\vfi{\varphi}
    \def\si{\sigma}
\newtheorem{proposition}{Proposition}[section]
\newtheorem{definition}[proposition]{Definition}
\newtheorem{lemma}[proposition]{Lemma}
\newtheorem{corollary}[proposition]{Corollary}
\theoremstyle{definition}
\newtheorem{remark}{Remark}
\newdimen\AAdi%
\newbox\AAbo%
\def\AArm{\fam0 }
\def\AAk#1#2{\setbox\AAbo=\hbox{#2}\AAdi=\wd\AAbo\kern#1\AAdi{}}%
\def\AAr#1#2#3{\setbox\AAbo=\hbox{#2}\AAdi=\ht\AAbo\raise#1\AAdi\hbox{#3}}%
\def\BBone{{\AArm 1\AAk{-.8}{I}I}}%
\newcommand {\CL}{{\cal L}}
\newcommand{\llb}{\llbracket}
\newcommand{\rrb}{\rrbracket}
\newcommand{\disp}{\displaystyle}
\newcommand{\eps}{\varepsilon}
\newcommand{\8}{\infty}
\def\m1{{-1}}
\def\S{\Sigma}
\def\s{\sigma}
\newcommand{\N}{\mathbb{N}}
\newcommand{\R}{\mathbb{R}}
\newcommand{\0}{0^\8}
\newcommand{\1}{1^\8}
\newcommand{\bmeia}{\frac\beta2}
\newcommand{\tbmeia}{\frac{3\beta}{2}}
\title{Selection of measures for a potential with
two maxima at the zero temperature limit}
\author{A. T. Baraviera\thanks{baravi@mat.ufrgs.br, Instituto de Matem\'atica - UFRGS - Partially supported by DynEuroBraz},
R. Leplaideur\thanks{Renaud.Leplaideur@univ-brest.fr, D\'epartement de Math\'ematiques - Universit\'e de Brest- Partially supported by DynEuroBraz and  Convenio Brasil-Franca},
A. O. Lopes\thanks{arturoscar.lopes@gmail.com, Instituto de Matem\'atica - UFRGS - Partially supported by DynEuroBraz, CNPq, PRONEX -- Sistemas
Dinamicos,  INCT, Convenio Brasil-Franca, and beneficiary of CAPES financial support}}
\begin{document}

\maketitle

\begin{abstract}
For the subshift of finite type $\S=\{0,1,2\}^{\N}$ we study the convergence at temperature zero of the  Gibbs measure associated to a non-locally constant H\"older potential which admits only two maximizing measures. These measures are Dirac measures at two different fixed points.
The potential is flattest at one of these two fixed points.

The question we are interested is: which of these probabilities the invariant Gibbs state will select when temperature goes to zero?

We prove that on the one hand the Gibbs measure converges,  and at the other hand it does not necessarily converge to the measures where the potential is the flattest.

We consider a family of potentials of the above form; for some of them there is the selection of
a convex combination of the two Dirac measures, and for others there is a selection of the Dirac measure associated to the flattest point.
In the first case this is contrary to what was expected if we consider
the analogous problem in Aubry-Mather theory \cite{Anantharaman-flat}.
\vspace{0.5cm}

\noindent
{\bf Keywords:} selection of measures, transfer operator, Gibbs measures, ergodic optmization
\end{abstract}


\section{Introduction}
\subsection{optimization and selection}
The topic of optimization in Ergodic Theory deals with the study of maximizing or minimizing measures. Considering a dynamical system $(X,T)$ and $A:X\rightarrow \R$, a $A$-maximizing measure is a $T$-invariant probability measure $\mu$ such that
$$\int A\,d\mu=\max_{\nu\ T-inv}\left\{\int A\,d\nu\right\}.$$
Existence of maximizing measures is for instance ensured when $X$ is compact, and $T$ and $A$ are continuous.

The problem of selection deals with the limit at temperature zero of equilibrium state. A measure $\mu$ is an equilibrium state for $A$ if it satisfies
$$h_{\mu}(T)+\int A\,d\mu=\sup_{\nu\ T-inv}\left\{ h_{\nu}(T)+\int A\,d\nu\right\},$$
where $h_{\nu}(T)$ is the usual Kolmogorov entropy. It is well-known that any accumulation point for the equilibrium state $\mu_{\beta}$ associated to $\beta A$, where $\beta$ is a large positive real parameter, as $\beta$ goes to $+\8$ is a $A$-maximizing measure. In Statistical Mechanics the parameter $\beta$ is the inverse of the temperature. The study of selection is to consider the following question:  which maximizing measure is obtained as the limit of the equilibrium state associated to $\beta A$, when $\beta \to \infty$? In some cases there is no convergence (see \cite {chazottes-cv}). When the maximizing probability is unique there is convergence. Therefore, the interesting situation to analyze is when there is more than one $A$-maximizing probability.

\medskip

In \cite{Anantharaman-flat}, Anantharaman \emph{ and al.} study one example of selection for Lagrangian dynamics. They consider an external parameter $\epsilon$, and for each $\epsilon$ there is a natural probability which can be associated to an eigen-function problem.
There, they show, among other things,  that if the potential has only two points of maxima,
then this natural probability converges, when $\epsilon \to 0,$  to the Dirac measure concentrated in the point (which is maxima of the potential) were the
potential is the flattest.

In the present paper we study the same kind of problem but for the dynamics of the shift with three symbols.
The main difference between these two problems (Euler-Lagrange flow and the shift) is that for the case of the dynamics  of the shift,
every choice of $A$ is possible and makes sense  to be analyzed.  In Aubry-Mather theory
the dynamics (the Euler-Lagrange flow) depends of the Lagrangian
(or, potential) considered. In our case there is no relation of the potential with the dynamics. In \cite{Anantharaman-flat} the parameter $\epsilon$, such that $\frac{1}{\epsilon}$
goes to infinity, is related to viscosity solutions, and here
the parameter $\beta$ is the inverse of temperature. The question of selection
makes sense in both settings.
There is a natural  hope, that every result in one theory has its dual version for the other theory. This was the first motivation for this paper:
considering in $\S:=\{0,1,2\}^\N$ the  Holder potential
$$
    A(x) = \left\{
              \begin{array}{cc}
                -d(x, \0) & \text{if $x \in[0]$}   \\
               -3 d(x, \1) & \text{if $x \in [1]$}   \\
                -\al    &  \text{otherwise}
              \end{array}.
           \right.
 $$

 It is reasonable to say that the potential $A$ is more flat at $0^\infty$.
We initially expected that  the \emph{Gibbs measure} for the potential $\beta A$ converges to the  $\delta_{\0}$, as the temperature goes to 0, and we wanted to study how the selection occurs.

 For our surprise, for the case $\alpha<1$, we find out that the Gibbs measure always converges, but not to $\delta_{\0}$; it can select in the limit
 another convex
 combination of $\delta_{\0}$ and $\delta_{\1}$.

Moreover, the convex combination is not continuous on $\alpha$. Nevertheless it takes quite surprising values as a function
of $\alpha$. For this reason we believe that it would be very difficult to establish a global and general
selection theory for the class of all subshifts of finite type (with finite alphabet) and any potential,
not only due to this unexpected selection behavior but also because even convergence does not always
occur, see \cite{chazottes-cv}.

The invariant probability is obtained by the junction of the eigen-function and the eigen-probability \cite{PP}. A curious phenomena that happens in our examples is that the eigen-measure and
the eigen-function (see section (\ref{subsec-notation}) for definitions)  have opposite behavior. When $\beta \to \infty$, the eigen-measure became  exponential bigger  around $0^\infty$, when compared to points  around $1^\infty$. For the eigen-function the opposite   happens. Therefore, we need a very fine analysis of the control of the invariant Gibbs state.

The terminology "selection" was borrowed from the
theory of viscosity solutions (see for instance \cite{Anantharaman-flat} for references).

\subsection{Statement of result}
We work here with a full shift $\S$ over the alphabet  $\{0, 1, 2\}$. Points in $\S$ are sequences $x=(x_0,x_1,\ldots)$ with $x_i\in \{0, 1, 2\}$.
We will consider the usual terminology and the usual topology in $\S$.
 Hence, we recall that a cylinder $[X_0,\ldots X_k]$ is the set of points  $x=(x_n)$ such that $x_i=X_i$ for every $i\in \llb 0,k\rrb$. We also recall that the distance between $x=(x_n)$ and $y=(y_n)$ is defined by
$$d(x,y)=\frac1{2^{\min\{n,\ x_n\neq y_n\}}}.$$
The two special points $\0$ and $\1$  respectively denote the points $(0,0,\ldots)$ and $(1,1,\ldots)$. They are fixed points for the shift $\s$ over $\S$.

As we said above, we consider over
 this shift the Lipschitz potential $A$ defined as follows:
 $$
    A(x) = \left\{
              \begin{array}{cc}
                -d(x, \0) & \text{if $x \in [0]$}   \\
               -3 d(x, \1) & \text{if $x \in [1]$}   \\
                -\al    &  \text{otherwise}
              \end{array}
           \right.
 $$
for some $\al > 0$. Then this potential is always non-positive. There are only two maximizing measures, respectively $\delta_{\0}$ and $\delta_{\1}$. We point out that the potential is flattest close to $\0$.

It is well-known (see \emph{e.g.} \cite{bowen}) that there exists a unique equilibrium state for $\beta A$ (for all $\beta\in \R$).
It is also a Gibbs measure (see also Subsection \ref{subsec-notation}).

Our main result is:

\medskip
\noindent
{\bf Theorem}
{\it Let $\mu_{\beta}$ be the unique Gibbs measure associated to $\beta A$. Let $\rho$ be the golden mean $\rho:=\disp\frac{1+\sqrt5}2$. Then
\begin{enumerate}
	\item for $\alpha>1$, $\mu_{\beta}$ converges to $\frac12(\delta_{\0}+\delta_{\1})$ as $\be$ goes to $+\8$,
	\item for $\al=1$, $\mu_{\beta}$ converges to $\frac1{1+\rho^2}(\rho^2\delta_{\0}+\delta_{\1})$ as $\be$ goes to $+\8$,
	\item for $\alpha<1$, $\mu_{\beta}$ converges to $\delta_{\0}$ as $\be$ goes to $+\8$.
\end{enumerate}}

\medskip
\noindent
As we already said it above, this result is surprising because it was expected that in every cases $\mu_\be$ would converge to $\delta_{\0}$.
Discontinuity of the limit measure as a function of $\al$ is of course less surprising. Nevertheless, the values which appear in function of $\al$, and in particular for $\alpha=1$, are quite surprising.

For $\al=0$, it is expected that $\mu_\be$ converges to $\delta_{2^\8}$ (the flattest one !). Then, we could have expected the exact inverse situation between $\al<1$ and $\al>1$: {\it for $\al<1$, $\mu_\be$ would converge to $\disp\frac12(\delta_{\0}+\delta_{\1})$ and for $\al>1$, $\mu_\be$ would converge to $\delta_{\0}$}, the measure $\disp \frac12(\delta_{\0}+\delta_{\1})$ being a kind of ``smooth'' transition with the limit case $\delta_{2^\8}$ for the case $\al=0$. It turns out that this is not the case.

On the other hand, if $\al$ goes to $+\8$, the system looks, in some sense, the full-shift with two symbols $\{0,1\}^\N$.
In that case, it is not so surprising that the limit measure is
$\disp\frac12(\delta_{\0}+\delta_{\1})$, whatever the slopes are. Indeed, for $\{0,1\}^\N$, every
$\mu_\be$ typical orbit is an alternation of strings of 0's and 1's. Following \cite{leplaideur-max},
the convex combination would be given by the costs between the two maximizing zones, $\delta_{\0}$ and $\delta_{\1}$.
Hence, every typical orbit sees the two symbols and this is an heuristic argument which in some
sense justifies
that $\mu_\be$ converges to $\disp\frac12(\delta_{\0}+\delta_{\1})$.

We emphasize that one simple generalization of our theorem would be to replace $-3d(x,\1)$ with
some $-\Gamma d(x,\1)$, with $\Gamma>1$.
In this case the same result holds and  our method can easily be adapted. Nevertheless,
the computation would be a little bit more complicate and the formulas less convenient to be used.

\subsection{More notations- plan of the proof}\label{subsec-notation}
If $y=(y_1,y_2,\ldots)$ is a point in $\S$ and if $a=0,1,2$, we denote by $ay$ the point $(a,y_1,y_2,\ldots)$ in $\S$.

We recall that
the main tool is the transfer operator defined as follows:
\begin{eqnarray*}
   \CL_{\be} \vfi(x) &=& \sum_{y \in \si^{-1}(x)} e^{\be A(y)} \vfi(y)\\
   &=&e^{-\beta d(0x,\0)}\vfi(0x)+e^{-\beta d(1x,\1)}\vfi(1x)+e^{-\al\be}\vfi(2x).
\end{eqnarray*}
where $\be$ is the inverse of the temperature.
It acts on continuous functions and its dual operator, denoted by $\CL^*_{\be}$, acts on  probability measures.
Most of the time we will omit the subscript $\be$.

We know that there exists some function $H$ and some probability
measure $\nu$ such that $\CL(H)= e^P H$ and $\CL^* (\nu) = e^P \nu$.
Then, the probability measure $d\mu = H d\nu$ is $\si-$invariant and is the unique equilibrium state.
It is the so called Gibbs
measure associated to $\beta A$.

Throughout, they will thus be referred to as the eigen-measure and the eigen-function.

\bigskip
The plan of the proof of the main result of the paper is the following:

In Section \ref{sec-subaction} we give the exponential asymptotics for the eigen-function
(obtaining what is called a calibrated subaction) and the pressure.

In Section \ref{sec-eigen-measure} we prove the convergence of the eigen-measure to $\delta_{\0}$. For this we give precise values for the $\nu$-measures of rings.

In Section \ref{sec-eigen-function} we compute the exact values of the eigen-function
on the same rings considered before in Section \ref{sec-eigen-measure}.

In Section \ref{sec-end-proof} we finish the proof of our Theorem.

\section{Exponential asymptotic for the pressure and the eigen-function}\label{sec-subaction}
We first recall a usual definition in that theory.

\begin{definition}
We say that $u:\Omega \to \mathbb{R}$
is a calibrated subaction for $A$ if for any  $y$ we have
$$
 u(y) =\sup_{\sigma(x)=y}\{A(x)+ u(x)-m(A)\}.
$$
\end{definition}

We denote by $V$ any accumulation point for
$\displaystyle\frac{1}{\beta} \log H_{\beta}
$ as $\beta$ goes to $+\8$.
It is clearly a calibrated subaction, see \cite{CLT}. If we add a constant to a calibrated subaction, it will be also a calibrated subaction.

We remind that the Peierls' barrier is given by
$$
 h(x,y) =
 \lim_{\epsilon\to 0}\limsup_n\,
  \{ \sum_{j=0}^{n-1} A(\sigma^j(z))-m(A), n\geq 0, \sigma^n(z)=y, d(z,x)<\epsilon\}.
$$
\begin{remark}\label{rem-peierl-negative}
We let the reader check that for every $x\neq 0^\8,1^\8$ both numbers $h(0^\8,x)$ and $h(1^\8,x)$ are negative.
\end{remark}

It is known that if $u$ is a calibrated subaction
then it satisfies
$$
  u(y) = \sup_{\mathbf x \in \Omega} [h(\mathbf x, y)+  u(\mathbf x) ],
$$
where $h$ is the Peierls barrier and $\Omega$ is the Aubry-set \cite{CLT} \cite{GL2} (Theorem 10).

In the present case the Aubry-set is the union of the two fixed points  $p= 0^\infty$ and $q=1^\infty$.
In this way, any calibrated subaction is determined by its values on $p$ and $q$.

\begin{lemma}\label{lem-distances-subac}
 The functions defined by $u_0(x)=-d(x,0^\8)$ and $u_1(x)=-3d(x,1^\8)$  are both calibrated subactions.
\end{lemma}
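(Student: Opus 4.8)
The plan is to verify directly that each of the two functions $u_0$ and $u_1$ satisfies the calibration equation
$$
 u(y) = \sup_{\sigma(x)=y}\{A(x)+u(x)-m(A)\}.
$$
Since all preimages of a point $y=(y_0,y_1,\ldots)$ under the shift are the three points $0y$, $1y$, and $2y$, and since $m(A)=0$ (because $A$ is non-positive and attains $0$ at the fixed points $0^\infty,1^\infty$, so the maximizing measures give $\int A\,d\mu=0$), the equation to check becomes
$$
 u(y) = \max\Big\{A(0y)+u(0y),\ A(1y)+u(1y),\ A(2y)+u(2y)\Big\}.
$$
So the whole proof reduces to an explicit computation of the three candidate values on the right and a comparison with the left side, for each of $u_0$ and $u_1$.

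The key computation is to control how $d(\cdot,0^\infty)$ and $d(\cdot,1^\infty)$ transform under prepending a symbol. First I would record the elementary distance identities: for any $a\in\{0,1,2\}$ and any $y$, we have $d(ay,a^\infty)=\tfrac12 d(y,a^\infty)$ when we prepend the matching symbol, whereas prepending a non-matching symbol puts the point at distance $1$ from that fixed point (since the sequences already differ in the zeroth coordinate). Combined with the definition of $A$ on the three cylinders $[0],[1],[2]$, this lets me evaluate $A(ay)$ on each branch. For $u_0$, I would then compute $A(0y)+u_0(0y)=-d(0y,0^\infty)-d(0y,0^\infty)=-2\cdot\tfrac12 d(y,0^\infty)=-d(y,0^\infty)=u_0(y)$ on the branch $a=0$, and check that the other two branches ($a=1$ and $a=2$) give values that are $\le u_0(y)$, so that the supremum is realized at $a=0$ and equals $u_0(y)$. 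The analogous computation for $u_1$ uses the factor $3$ in front of $d(\cdot,1^\infty)$: on the branch $a=1$ one finds $A(1y)+u_1(1y)=-3d(1y,1^\infty)-3d(1y,1^\infty)=-3 d(y,1^\infty)=u_1(y)$, and again I would verify the remaining two branches do not exceed this.

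The only step requiring care -- and \emph{the main obstacle} -- is the inequality part, i.e.\ showing that the non-matching branches never overshoot the matching one, uniformly over all $y$. Here one must handle the two regimes of $y$ separately: when $y$ itself starts with the ``wrong'' symbol the subtracted distance is pinned at $1$, whereas when $y$ starts with the matching symbol the geometry changes, and one has to keep track of whether the prepended non-matching symbol lands in cylinder $[2]$ (cost $-\alpha$) or in the other distinguished cylinder. Since $\alpha>0$ is a free parameter, I would need to confirm these comparisons hold for \emph{all} $\alpha>0$, as the statement claims both functions are calibrated subactions independently of $\alpha$; I expect that the $-\alpha$ term and the $\le 1$ bounds on the distances combine favorably, but this is exactly where a clean case analysis is needed rather than a one-line estimate. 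Once these inequalities are established in each case, the calibration identity holds with the supremum attained on the matching branch, completing the proof.
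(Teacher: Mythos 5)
Your proposal is correct and follows essentially the same route as the paper: a direct verification that the calibration identity (with $m(A)=0$) holds over the three preimages $0y,1y,2y$, with equality attained on the matching branch via $d(ay,a^\infty)=\tfrac12\,d(y,a^\infty)$. The only remark worth adding is that the inequality you flag as the main obstacle is in fact a one-line estimate valid for every $\alpha>0$: for $u_0$ the two non-matching branches are bounded above by $0+(-1)=-1\le -d(y,0^\infty)=u_0(y)$ (and by $-3\le u_1(y)$ for $u_1$), which is exactly how the paper disposes of them.
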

\begin{proof}
The proof is only done for $u_0$, the other case  being similar.
We consider $y\in\S$ and we want to prove

\begin{equation}\label{eq1-peierl}
-d(0^\8,y)=:u_0(y) =max\{A(0y)+ u_0(0y),A(1y)+ u_0(1y),A(2y)+ u_0(2y)\}.
\end{equation}
We set $y=(y_0,y_1,y_2,\ldots)$.
We first assume that $y_0\neq 0$. Note that both $A(1y)$ and $A(2y)$ are negative and $u_0(1y)=u_0(2y)=-1$. Hence $u_0(y)=-1$ is bigger than both terms
$A(1y)+ u_0(1y)$ and $A(2y)+ u_0(2y)$. Now $A(0y)=-\frac12$ and $u_0(y)=-\frac12$. Hence \eqref{eq1-peierl} holds in that case.
Assume now that $y$ belong to the cylinder $0^n\*_0$. Then $u_0(y)=\frac{-1}{2^n}$. Again, note that $u_0(y)$ is bigger than both terms
$A(1y)+ u_0(1y)$ and $A(2y)+ u_0(2y)$. We also get
$$\frac{-1}{2^n}=\frac{-1}{2^{n+1}}+\frac{-1}{2^{n+1}}=A(0y)+u_0(0y).$$
Hence, \eqref{eq1-peierl} holds in that case too.
\end{proof}

Using Lemma \ref{lem-distances-subac} we can get a more simple formulation for $V$.
\begin{lemma}\label{lem-formule-V}
  $$V(x) = \sup\{ [  V(0^\infty ) - d(0^\infty , x)],[  V(1^\infty )- 3\,d(1^\infty , x) ]   \}$$
\end{lemma}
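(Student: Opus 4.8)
The plan is to compute the two Peierls barriers $h(\0,\cdot)$ and $h(\1,\cdot)$ and to insert them into the representation formula for calibrated subactions recalled above. Since in our situation the Aubry set is $\Omega=\{\0,\1\}$, that formula specialises, for the subaction $V$, to
$$
 V(x)=\max\bigl\{\,h(\0,x)+V(\0)\,,\ h(\1,x)+V(\1)\,\bigr\},\qquad x\in\S .
$$
Thus it suffices to prove the two identities $h(\0,x)=-d(\0,x)$ and $h(\1,x)=-3\,d(\1,x)$ for every $x$; granting them, the displayed formula becomes exactly the claim of the lemma, because $-d(\0,\cdot)=u_0$ and $-3\,d(\1,\cdot)=u_1$ are the calibrated subactions of Lemma~\ref{lem-distances-subac}.

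For the upper bounds I would exploit the representation formula once more, now applied to the calibrated subaction $u_0$ itself: retaining only the term indexed by $\0$ gives $u_0(x)\ge h(\0,x)+u_0(\0)$, i.e. $h(\0,x)\le u_0(x)-u_0(\0)=-d(\0,x)$ for all $x$, and likewise $h(\1,x)\le-3\,d(\1,x)$ using $u_1$. For the reverse inequalities I would produce explicit connecting orbits. Observe first that $A\le 0$ with $A(\0)=A(\1)=0$, so $m(A)=0$ and the normalising constant in the definition of $h$ vanishes. Writing $\ell\in\{0,1,2,\dots\}$ for the number of leading $0$'s of $x\neq\0$ (so $d(\0,x)=2^{-\ell}$), I set $z_n:=0^n x$; then $\sigma^n(z_n)=x$, $d(z_n,\0)=2^{-(n+\ell)}\to 0$, and each $\sigma^j(z_n)$ with $0\le j<n$ lies in $[0]$ at distance $2^{-(n+\ell-j)}$ from $\0$, whence
$$
 \sum_{j=0}^{n-1}A(\sigma^j(z_n))=-\sum_{m=\ell+1}^{n+\ell}2^{-m}\ \longrightarrow\ -2^{-\ell}=-d(\0,x)\qquad(n\to\8).
$$
This yields $h(\0,x)\ge-d(\0,x)$; the symmetric orbit $z_n:=1^n x$, together with the slope $3$, gives $h(\1,x)\ge-3\,d(\1,x)$. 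The case $x=\0$ (resp. $x=\1$) is trivial.

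Combining the two sets of inequalities gives $h(\0,\cdot)=u_0$ and $h(\1,\cdot)=u_1$, and substitution into the specialised representation formula yields
$$
 V(x)=\max\bigl\{\,V(\0)-d(\0,x)\,,\ V(\1)-3\,d(\1,x)\,\bigr\},
$$
which is the statement. I expect the only real obstacle to be the lower bound: one has to choose the connecting orbit so that its entire forward segment $\sigma^0(z_n),\dots,\sigma^{n-1}(z_n)$ remains in the cylinder $[0]$ (resp. $[1]$), so that each Birkhoff term is exactly the corresponding distance, and then verify that the resulting geometric series telescopes precisely to $d(\0,x)$ (resp. $3\,d(\1,x)$); once the barriers are identified, the rest is a direct reading of the two-point representation formula.
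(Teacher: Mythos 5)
Your proposal is correct, and its overall skeleton coincides with the paper's: both reduce the lemma to the identities $h(0^\8,\cdot)=u_0$ and $h(1^\8,\cdot)=u_1$ and then read off the claim from the two-point representation formula $V(x)=\max\{h(0^\8,x)+V(0^\8),\,h(1^\8,x)+V(1^\8)\}$. Where you diverge is in how the identification of the barriers is established. The paper applies the representation formula to $u_0$ itself, obtaining $u_0(x)=\max\bigl(h(0^\8,x),\,h(1^\8,x)-1\bigr)$, and then rules out the second branch by combining $u_0\ge -1$ with the negativity of the Peierls barriers (Remark~\ref{rem-peierl-negative}, which is left to the reader); this is a short exclusion argument that never computes $h$ directly. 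You instead prove the two inequalities separately: the upper bound $h(0^\8,x)\le -d(0^\8,x)$ by keeping a single term of the sup in the representation formula for $u_0$ (same ingredient as the paper), and the lower bound by exhibiting the connecting orbits $z_n=0^n x$ and checking that the Birkhoff sums telescope to $-d(0^\8,x)$ (and likewise with $1^n x$ and slope $3$). Your computation of these sums, of $m(A)=0$, and of the distances is correct, including the treatment of $x=0^\8$, $x=1^\8$ as separate trivial cases. What your route buys is self-containedness: it does not invoke Remark~\ref{rem-peierl-negative} and it actually exhibits the value of the Peierls barrier rather than inferring it, at the cost of a slightly longer argument; the paper's route is shorter but delegates the negativity of the barriers to an unproved remark.
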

\begin{proof}
This follows from the fact that such $V$ is calibrated and from the expression of the Peierls barrier. Indeed, we claim that we have
$$h(0^\infty,y) = u_0(y) \mbox{ and } h(1^\infty,y) = u_1(y).$$
Again, we only prove that we get $h(0^\8,x)=u_0(x)=-d(x,0^\8
)$, the other equality being similar.

Let $x=(x_0,x_1,\ldots)$ be in $\S$. We get
$$u_0(x)=\max(h(0^\8,x)+u_0(0^\8),h(1^\8,x)+u_0(1^\8))=
\max(h(0^\8,x),h(1^\8,x)-1).$$
Note that $u_0(x)\ge-1$ and by Remark \ref{rem-peierl-negative} the Peierls barriers are both negative. Hence we must get
$$u_0(x)=h(0^\8,x).$$
\end{proof}

Now, we use properties of the eigenfunction $H_\beta$ to obtain
some relations satisfied by $V$. A calibrated subaction, in the present situation,  is determined by its values $ 0^\infty$ and $1^\infty$. We just need the relative values of $V$ at these points.

\begin{proposition}\label{prop-VetP}
For $\alpha>1$, we get $V(1^\infty) = V(0^\infty) + 1$ and $\disp\lim_{\beta\rightarrow+\8}\frac1\beta\log P(\beta)=-2$.

For $0<\alpha\le1$, we get $V(1^\infty) = V(0^\infty) + \alpha$ and $\disp\lim_{\beta\rightarrow+\8}\frac1\beta\log P(\beta)=-(1+\alpha)$.
\end{proposition}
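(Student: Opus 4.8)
The plan is to read off both unknowns $V(1^\infty)-V(0^\infty)$ and $\lim_{\beta\to\infty}\frac1\beta\log P(\beta)$ from the eigenvalue equation $\CL_\beta H_\beta=e^{P(\beta)}H_\beta$ specialized at the two fixed points. Normalizing the eigenfunction by $H_\beta(0^\infty)=1$ (so that every accumulation point $V$ of $\frac1\beta\log H_\beta$ satisfies $V(0^\infty)=0$), and using $\sigma^{-1}(0^\infty)=\{0^\infty,10^\infty,20^\infty\}$ and $\sigma^{-1}(1^\infty)=\{01^\infty,1^\infty,21^\infty\}$ together with the relevant distances, the two fixed-point identities read
\begin{align*}
(e^{P(\beta)}-1)\,H_\beta(0^\infty)&=e^{-3\beta/2}\,H_\beta(10^\infty)+e^{-\alpha\beta}\,H_\beta(20^\infty),\\
(e^{P(\beta)}-1)\,H_\beta(1^\infty)&=e^{-\beta/2}\,H_\beta(01^\infty)+e^{-\alpha\beta}\,H_\beta(21^\infty).
\end{align*}
Since the right-hand sides are strictly positive we get $e^{P(\beta)}>1$ for free; and as $m(A)=0$ one has $P(\beta)\to0$, so $\frac1\beta\log(e^{P(\beta)}-1)$ and $\frac1\beta\log P(\beta)$ share the same limit, call it $\mathcal P$.

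First I would fix a subsequence $\beta_k\to\infty$ along which $\frac1\beta\log H_\beta$ converges (uniformly, by the standard equi-Lipschitz bound that is used to produce the subaction, see \cite{CLT}) to an accumulation point $V$, and along which $\frac1\beta\log(e^{P(\beta)}-1)\to\mathcal P$. Applying $\frac1\beta\log$ to each identity and using the Laplace principle $\frac1\beta\log(a_\beta+b_\beta)\to\max(\lim\frac1\beta\log a_\beta,\lim\frac1\beta\log b_\beta)$ yields, with $t:=V(1^\infty)$,
\begin{align*}
\mathcal P&=\max\{-\tfrac32+V(10^\infty),\,-\alpha+V(20^\infty)\},\\
\mathcal P+t&=\max\{-\tfrac12+V(01^\infty),\,-\alpha+V(21^\infty)\}.
\end{align*}
The four preimage values are then computed explicitly from Lemma \ref{lem-formule-V} by a direct distance count: $V(10^\infty)=\max\{-1,t-\tfrac32\}$, $V(20^\infty)=V(21^\infty)=\max\{-1,t-3\}$ and $V(01^\infty)=\max\{-\tfrac12,t-3\}$.

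This turns the problem into a $2\times2$ system of piecewise-linear equations in $(\mathcal P,t)$, and the remaining work is to select the active linear branches. To keep this finite I would first record the a priori bound $t\in[-1,3]$, which follows from Lemma \ref{lem-formule-V} at $x=1^\infty$ and $x=0^\infty$ (namely $V(1^\infty)\ge V(0^\infty)-d(0^\infty,1^\infty)$ and $V(0^\infty)\ge V(1^\infty)-3\,d(1^\infty,0^\infty)$). On this range the second equation is easily analyzed: for $t\le2$ it forces $\mathcal P+t=-1$, and the complementary interval $t\in(2,3]$ will be discarded by a short check against the first equation. Substituting $\mathcal P=-1-t$ into the first equation reduces it to the scalar relation $-1-t=\max\{t-3,-1-\alpha\}$ when $t\ge\tfrac12$, and $-1-t=\max\{-\tfrac52,-1-\alpha\}$ when $t\le\tfrac12$. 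Solving these case by case on $\alpha$ gives exactly $t=1,\ \mathcal P=-2$ for $\alpha\ge1$ and $t=\alpha,\ \mathcal P=-(1+\alpha)$ for $0<\alpha\le1$ (the two formulas agreeing at $\alpha=1$), which is the assertion of the Proposition.

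Because the scalar equation has a \emph{unique} admissible solution for each $\alpha$, every accumulation point of $\bigl(\frac1\beta\log H_\beta(1^\infty),\frac1\beta\log P(\beta)\bigr)$ must equal this same pair; hence the limits genuinely exist rather than merely accumulate, as the statement requires. I expect the main obstacle to be the analytic justification of the limiting passage rather than the algebra: one must ensure that $\frac1\beta\log H_\beta$ converges uniformly near the preimage points so that the max-plus rule may be applied term by term and $V$ may be evaluated at $10^\infty,20^\infty,01^\infty,21^\infty$ through Lemma \ref{lem-formule-V}, and that the correction between $e^{P(\beta)}-1$ and $P(\beta)$ is negligible after dividing by $\beta$. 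The equi-Lipschitz estimate settles the first point and $P(\beta)\to0$ settles the second, so the only residual care is the bookkeeping of which linear branch is active, which the bound $t\in[-1,3]$ keeps under control.
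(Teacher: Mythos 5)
Your proposal is correct and follows essentially the same route as the paper: both take $\frac1\beta\log$ of the eigenfunction identity at the two fixed points, use the max-plus (Laplace) rule together with the Peierls-barrier formula of Lemma \ref{lem-formule-V} to evaluate $V$ at the preimages, and then solve the resulting piecewise-linear system, whose unique admissible solution forces convergence of all accumulation points. The only difference is bookkeeping — you derive the a priori bound $t\in[-1,3]$ and solve the second equation first, whereas the paper splits on $\alpha$ and argues by contradiction — but the cases and conclusions coincide.
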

\begin{proof}

Up to the fact that we consider a sub-family we assume that $\disp\lim_{\beta\rightarrow+\8}\frac1\beta\log P(\beta)$ exists and is equal to real number $\gamma$.

From the equation for the eigenfunction we get the pair of equations
\begin{subeqnarray}
 (e^{P(\beta)} -1)\, H_\beta(0^\infty) &=&
  e^{-\alpha \beta} H_\beta(2) + e^{-\frac{3}{2}\, \beta}\, H_\beta(1\,0^\infty),\slabel{equ1-eigenfunc}\\
 (e^{P(\beta)} -1)\, H_\beta(1^\infty) &=&
  e^{-\alpha \beta} H_\beta(2) + e^{-\frac{1}{2}\, \beta}\, H_\beta(0\,1^\infty).\slabel{equ2-eigenfunc}
\end{subeqnarray}
Remember that by Lemma \ref{lem-distances-subac} we get
\begin{eqnarray*}
  V(1\0 ) &= &\max\{ [  V(0^\infty ) - 1],[  V(1^\infty )- \frac32\, ]   \},
\\
 V(2\,x_1\,x_2.. ) &=& \max\{ [  V(0^\infty ) - 1],
  [  V(1^\infty )- 1 ]   \},\\
  V(01^\infty ) &= &\max\{ [  V(0^\infty ) - \frac12],[  V(1^\infty )- 3\, ]   \}.
\end{eqnarray*}

Then, taking $\frac1\beta\log$ in Equation \eqref{equ1-eigenfunc} and making  $\beta$ go to $+\8$ we get
\begin{eqnarray}
 \gamma+ V(0^\infty)&=&
 \max \{ [V(0^\infty)-1-\alpha],[ V(1^\infty)-3-\alpha],
 [V(0^\infty) -1-\frac{3}{2} ],[V(1^\infty) -\frac{3}{2}-\frac{3}{2}]\}\nonumber\\
&=&
 \max \{ [V(0^\infty)-1-\alpha],[ V(1^\infty)-3-\alpha],
 [V(0^\infty) -\frac{5}{2} ],[V(1^\infty) -3]\}\nonumber\\
 &=&
 \max \{[V(0^\infty)-1-\alpha],
  [V(0^\infty) -\frac{5}{2} ],[V(1^\infty) -3]\}.\label{equ1-formulev}
\end{eqnarray}
Similarly with \eqref{equ2-eigenfunc} we finally get
\begin{equation}
\gamma+ V(1^\infty)=\max\{[V(0^\infty)-1],[V(1^\infty) -7/2],[V(\1)-3-\alpha]\}.
\end{equation}

\medskip
{\bf We first deal with the case $\alpha>1$}. We will show that $V(1^\infty) = V(0^\infty) + 1$. We divide the analysis in two cases:

1) if $\alpha>3/2$, then, we have to solve
\begin{subeqnarray}
\gamma+ V(0^\infty)&=&\max\{ [V(0^\infty) -\frac{5}{2} ],[V(1^\infty) -3]\},\slabel{equ1-a}\\
	\gamma+ V(1^\infty)&=&\max \{ [V(0^\infty) -1 ],[V(1^\infty) -7/2]\}.\slabel{equ1-b}
\end{subeqnarray}
Now, we show that this system of equation is solvable if and only if
$V(0^\infty) -\frac{5}{2}\le V(1^\infty) -3$ and $V(0^\infty) -1\ge V(1^\infty) -7/2$.

Suppose that $V(0^\infty) -\frac{5}{2}> V(1^\infty) -3$. Then, we get $\gamma+ V(0^\infty)=  V(0^\infty) -5/2$, which shows that we have $\gamma=-5/2$.
Thus, we must have $V(0^\infty) -1\ge V(1^\infty) -7/2$ (otherwise \eqref{equ1-b} would give $\gamma=-\frac72$), and we get
$$  V(0^\infty) -1 =\gamma+ V(1^\infty)=-5/2 + V(1^\infty). $$
From this follows that  $V(1^\infty) =3/2+ V(0^\infty)$. This yields
$$V(1^\infty)-3=(3/2 + V(0^\infty))-3=V(\0)-\frac32>V(\0)-\frac52,$$
which produces a contradiction.

Then, we have
\begin{equation}\label{equ1-calculVetP}
	\gamma+ V(0^\infty)=V(1^\infty) -3
\end{equation}
An important consequence is that we must get $\gamma\ge -\frac52$. If $V(0^\infty) -1\le V(1^\infty) -7/2$, then \eqref{equ1-b} shows that $\gamma$ is equal to $-\frac72$ which is impossible. Hence we must get

\begin{equation}\label{equ2-calculVetP}
\gamma+ V(1^\infty)=V(0^\infty) -1.
\end{equation}
Finally,  \eqref{equ1-calculVetP} and \eqref{equ2-calculVetP} yield $\gamma=-2$, and $V(1^\infty)=V(0^\infty) +1$.

2) The case $1<\alpha\le\frac32$. The proof is similar.
 It is explicitly reproduced here, but the reader can skip it in a first reading.

The new system to solve is
\begin{subeqnarray}
\gamma+ V(0^\infty)&=&\max\{ [V(0^\infty) -(1+\alpha) ],[V(1^\infty) -3]\},\slabel{equ2-a}\\
	\gamma+ V(1^\infty)&=&\max \{ [V(0^\infty) -1 ],[V(1^\infty) -7/2]\}.\slabel{equ2-b}
\end{subeqnarray}
Again, we show that this system of equation is solvable if, and only if,
$V(0^\infty) -(1+\alpha)\le V(1^\infty) -3$ and $V(0^\infty) -1\ge V(1^\infty) -7/2$.

Suppose that $V(0^\infty) -(1+\alpha)> V(1^\infty) -3$. Then, we get $\gamma+ V(0^\infty)=  V(0^\infty) -(1+\alpha)$, which shows that we have $\gamma=-(1+\al)>-\frac52$.
Thus, we must have $V(0^\infty) -1\ge V(1^\infty) -7/2$ (otherwise \eqref{equ2-b} would give $\gamma=-\frac72$), and we get
$$  V(0^\infty) -1 =\gamma+ V(1^\infty)=-(1+\al) + V(1^\infty). $$
From this follows that  $V(1^\infty) =\al+ V(0^\infty)$. This yields
$$V(1^\infty)-3=(\al + V(0^\infty))-3=V(\0)-2>V(\0)-\frac52,$$
which produces a contradiction.

Then, we have
\begin{equation}\label{equ3-calculVetP}
	\gamma+ V(0^\infty)=V(1^\infty) -3
\end{equation}
An important consequence is that we must get $\gamma\ge -(1+\alpha)>-\frac52$. If $V(0^\infty) -1\le V(1^\infty) -7/2$, then \eqref{equ2-b} shows that $\gamma$ is equal to $-\frac72$ which is impossible. Hence we must get

\begin{equation}\label{equ4-calculVetP}
\gamma+ V(1^\infty)=V(0^\infty) -1.
\end{equation}
Finally,  \eqref{equ3-calculVetP} and \eqref{equ4-calculVetP} yield $\gamma=-2$, and $V(1^\infty)=V(0^\infty) +1$.

We point out here that the above discussion can be done for every sub-family of $\beta$'s.
In particular, this shows that $\disp\frac1\beta\log P(\beta)$ can have only one
accumulation point. In other words, it converges to $\gamma=-2$.

\medskip
{\bf Now, we deal with the case $\al\le 1$}. We will show that  $V(1^\infty) = V(0^\infty) + \alpha$.
The system we have to solve is
\begin{subeqnarray}
\gamma+ V(0^\infty)&=&\max\{ [V(0^\infty) -(1+\alpha) ],[V(1^\infty) -3]\},\slabel{equ3-a}\\
	\gamma+ V(1^\infty)&=&\max \{ [V(0^\infty) -1 ],[V(1^\infty) -7/2],[V(1^\infty) -3-\al]\}.\slabel{equ3-b}
\end{subeqnarray}
We show that, whatever is the case $\al\le\frac12$ or $\al\ge\frac12$, the system can be solved if, and only if,
$\disp V(0^\infty) -(1+\alpha) \ge V(1^\infty) -3$ and $V(0^\infty) -1 \ge V(1^\infty) -7/2,V(1^\infty) -3-\al$.

Let us proceed by contradiction and assume we get $\disp V(0^\infty) -(1+\alpha) < V(1^\infty) -3$. In that case, if we assume that we get $V(0^\infty) -1 \ge V(1^\infty) -7/2,V(1^\infty) -3-\al$, then the system to solve is exactly given by equations \eqref{equ1-calculVetP} and \eqref{equ2-calculVetP}. This yields $\gamma=-2$, and $V(1^\infty)=V(0^\infty) +1$.

Then, we get $\disp V(\1)-3=V(\0)-2\le V(\0)-(1+\alpha)$ which produced a contradiction with our assumption $\disp V(0^\infty) -(1+\alpha) < V(1^\infty) -3$.

This means that $V(0^\infty) -1 \le V(1^\infty) -7/2,V(1^\infty) -3-\al$, and the bigger  term only depends on the relative position of $\al$ with respect to $\frac12$. Depending of this position, we get $\disp\gamma=-\frac72$ or $\gamma=\disp-3-\alpha$. Then \eqref{equ3-a} would give in both case
$$V(\0)-\gamma>V(\0)-(1+\al),$$
which produces a contradiction. Hence, we must get $\disp V(0^\infty) -(1+\alpha) \ge V(1^\infty) -3$ and
\begin{equation}\label{equ1-petital}
\gamma=-(1+\al).
\end{equation}
If $V(0^\infty) -1 \ge V(1^\infty) -7/2,V(1^\infty) -3-\al$ does not hold, then we would get $\disp\gamma=-\frac72$ or $\gamma=\disp-3-\alpha$, which is impossible. Thus we must get $V(0^\infty) -1 \ge V(1^\infty) -7/2,V(1^\infty) -3-\al$ and we finally get
\begin{equation}\label{equ2-petital}
V(\1)+\gamma=V(\1)-(1+\al)=V(\0)-1.
\end{equation}
This finishes the proof of the proposition (again $\gamma$ is  the unique possible accumulation point for $\disp\frac1\beta\log P(\beta)$).

\end{proof}

\section{The eigen-measure $\nu$}\label{sec-eigen-measure}
In this section we study the eigen-measure $\nu_{\beta A}$. We prove it converges to the Dirac measure
$\delta_{\0}$. We also show exact limit values on special sets.

\subsection{A useful function}
We define and study a function $F$ depending on the pressure $P(\beta)$ and on the parameter $\beta$.
\begin{definition}\label{def-FPbeta}
 For $Z \geq 0$ and $\be \geq 0$
$\disp
    F(Z, \be) := \sum_{k = 0 }^{\infty} e^{-kZ} e^{\frac{\be}{2^{k+1}}}$
 and its partial sum
 $\disp F_n(Z,\be) := \sum_{k = 0 }^{n} e^{-kZ} e^{\frac{\be}{2^{k+1}}}
 $.
\end{definition}
 Clearly, $F_n(Z, \be) \to F(Z, \be)$ when $n \to \infty$.

 We remind that as $\beta$ goes to $+\8$, $P$ goes exponentially fast to $0$.
The asymptotic behavior of $F$ (for $\be$ very large) can be obtained as
follows:
\begin{lemma}\label{lem-FZbeta}
For every $\beta>2\ln 2$ we get
$$
  \left|F(P,\beta)-\frac{1}{P}\right|\le  \frac{\beta e^{\beta/2} }{2\, \ln 2} (2 + \sum_{n\geq 1} (\frac{P}{\ln 2})^n).
$$
\end{lemma}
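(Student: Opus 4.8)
The plan is to compare the series $F(P,\beta)$ with the integral of its own summand and, separately, to compare that integral with $\tfrac1P$, so that the two comparison errors can be controlled by the hypothesis $\beta>2\ln2$. Write $g(k):=e^{-kP}e^{\beta/2^{k+1}}$, so that $F(P,\beta)=\sum_{k\ge0}g(k)$, and set $I:=\int_0^\infty g(k)\,dk$. Since $e^{-kP}$ and $e^{\beta/2^{k+1}}$ are both positive and decreasing in $k$, the product $g$ is positive and decreasing; moreover $\frac1P=\int_0^\infty e^{-kP}\,dk\le I\le F(P,\beta)$, the first inequality because $e^{\beta/2^{k+1}}\ge1$ and the second because a positive decreasing summand dominates its own integral. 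Hence $F-\frac1P\ge0$ and it suffices to bound $(F-I)+(I-\frac1P)$.

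First I would bound the sum-integral defect. For a positive decreasing $g$ the standard sandwich $\int_0^\infty g\le\sum_{k\ge0}g(k)\le g(0)+\int_0^\infty g$ gives
$$0\le F(P,\beta)-I\le g(0)=e^{\beta/2}.$$
Next, $I-\frac1P=\int_0^\infty e^{-kP}\bigl(e^{\beta/2^{k+1}}-1\bigr)\,dk$. Using the elementary convexity estimate $e^x-1\le x\,e^x$ with $x=\beta/2^{k+1}\le\beta/2$, the integrand is at most $\beta\,2^{-(k+1)}e^{\beta/2}e^{-kP}$, and since $2^{-(k+1)}e^{-kP}=\tfrac12 e^{-k(P+\ln2)}$ one obtains
$$I-\frac1P\le\frac{\beta e^{\beta/2}}{2}\int_0^\infty e^{-k(P+\ln2)}\,dk=\frac{\beta e^{\beta/2}}{2(P+\ln2)}.$$
This is the step that produces the factor $\beta$ and the $\ln2$ in the denominator.

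It remains to assemble the two estimates into the stated form. Since $P+\ln2\ge\ln2-P$ we have $\frac1{P+\ln2}\le\frac1{\ln2-P}=\frac1{\ln2}\sum_{n\ge0}(P/\ln2)^n$ (if $P\ge\ln2$ the right-hand side of the Lemma is $+\infty$ and there is nothing to prove, so I may assume $P<\ln2$). Finally the hypothesis $\beta>2\ln2$ gives exactly $e^{\beta/2}\le\frac{\beta e^{\beta/2}}{2\ln2}$, which lets me absorb the sum-integral defect $e^{\beta/2}$ as one further copy of $\frac{\beta e^{\beta/2}}{2\ln2}$; adding it to $\frac{\beta e^{\beta/2}}{2\ln2}\sum_{n\ge0}(P/\ln2)^n=\frac{\beta e^{\beta/2}}{2\ln2}\bigl(1+\sum_{n\ge1}(P/\ln2)^n\bigr)$ reproduces the claimed bound with the leading constant $2$. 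The main obstacle is conceptual rather than computational: one must recognize that the error $e^{\beta/2}$ incurred in replacing the sum by its integral is \emph{not} of lower order, and that it is precisely the threshold $\beta>2\ln2$ that allows charging it against the main term $\frac{\beta e^{\beta/2}}{2\ln2}$. Bounding $F-I$ more crudely, or insisting on keeping $P+\ln2$ in the denominator, would yield an estimate of the wrong shape.
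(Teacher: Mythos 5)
Your proof is correct. It shares the paper's overall skeleton --- split $F-\frac1P$ into a sum-versus-integral defect plus an integral-versus-$\frac1P$ defect, observe that the summand $k\mapsto e^{-kP}e^{\beta/2^{k+1}}$ is decreasing so the first defect is at most the $k=0$ term $e^{\beta/2}$, and finally use $\beta>2\ln 2$ to absorb that $e^{\beta/2}$ into one copy of $\frac{\beta e^{\beta/2}}{2\ln 2}$ --- but the middle step is handled by a genuinely different and more elementary device. The paper evaluates the integral by parts, substitutes $u=2^{-x}$, and expands the resulting factor $e^{\pm P\ln u/\ln 2}$ as a power series in $P/\ln 2$, using $\int_0^1|\ln u|^n\,du=n!$ to bound each term by $\left(\frac{P}{\ln 2}\right)^n\frac\beta2 e^{\beta/2}$; this is exactly where the series $\sum_{n\ge1}(P/\ln 2)^n$ in the statement originates, and it requires justifying the interchange of sum and integral. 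You instead bound the integrand pointwise via $e^x-1\le xe^x$ with $x=\beta/2^{k+1}\le\beta/2$ and integrate directly, obtaining the cleaner (and in fact slightly sharper) intermediate estimate $I-\frac1P\le\frac{\beta e^{\beta/2}}{2(P+\ln 2)}$, which you then deliberately weaken to $\frac{\beta e^{\beta/2}}{2\ln 2}\sum_{n\ge0}(P/\ln 2)^n$ only to match the stated form. What your route buys is the avoidance of the series manipulation entirely, plus the correct observation that the case $P\ge\ln 2$ is vacuous; what the paper's route buys is that the stated bound appears naturally rather than by artificial inflation. For the way the lemma is actually used later (only the fact that $PF(P,\beta)\to1$ and the analogous statement for $F(P,3\beta)$, for $P$ exponentially small), either form suffices.
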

\begin{proof}
Let us consider a positive $Z$.
Note that the function $x\mapsto \disp -Zx+\frac{\beta}{2.2^{x}}$ is decreasing on $\R_{+}$. We can thus compare the sum and the integral:
$$\int_{0}^{+\8}Ze^{-xZ}e^{\bmeia\frac1{2^{x}}}\,dx\le ZF(Z,\beta)\le \int_{0}^{+\8}Ze^{-xZ}e^{\bmeia\frac1{2^{x}}}\,dx+Ze^{\frac\beta2}.$$
Let us study the integral. We get
\begin{eqnarray*}
\int_{0}^{+\8}Ze^{-xZ}e^{\bmeia\frac1{2^{x}}}\,dx&=&\left[-e^{-xZ}e^{\bmeia\frac1{2^{x}}}\right]^{+\8}_{0}-\int_{0}^{+\8} \frac\beta2 e^{-xZ}\frac{\ln 2}{2^{x}}e^{\bmeia\frac1{2^{x}}}\,dx.\\
&=&e^{\bmeia}-\int_{0}^{+\8} \frac\beta2 e^{-xZ}\frac{\ln 2}{2^{x}}e^{\bmeia\frac1{2^{x}}}\,dx.
\end{eqnarray*}
Let us set $u=\frac1{2^{x}}$ in this last integral. We get
$$\int_{0}^{+\8}Ze^{-xZ}e^{\bmeia\frac1{2^{x}}}\,dx=e^{\bmeia}-\int_{0}^1 \frac\beta2 e^{-Z\frac{\ln u}{\ln 2}}e^{\frac\beta2 u}\,du.$$
Writing $\disp e^{-Z\frac{\ln u}{\ln 2}}=\sum_{n=0}^{+\8}\frac1{n!}\left(-Z\frac{\ln u}{\ln 2}\right)^n$  we get
$$\int_{0}^{+\8}Ze^{-xZ}e^{\bmeia\frac1{2^{x}}}\,dx=e^{\bmeia}-\int_{0}^1 \frac\beta2 \sum_{n=0}^{+\8}\frac1{n!}\left(-Z\frac{\ln u}{\ln 2}\right)^ne^{\frac\beta2 u}\,du.$$
To get the inverse of the two sums we remind that
$\disp\int_{0}^1|\ln u|^n\,du=\int_{0}^{+\8}v^ne^{-v}\,dv=n!$. Then for $Z<\ln 2$ we get

\begin{eqnarray*}
\int_{0}^{+\8}Ze^{-xZ}e^{\bmeia\frac1{2^{x}}}\,dx&=&e^{\bmeia}- \sum_{n=0}^{+\8}\frac1{n!}\left(\frac{-Z}{\ln 2}\right)^n\int_{0}^1 \frac\beta2 ({\ln u})^ne^{\frac\beta2 u}\,du\\
&=& 1-\sum_{n=1}^{+\8}\frac1{n!}\left(\frac{-Z}{\ln 2}\right)^n\int_{0}^1 \frac\beta2 ({\ln u})^ne^{\frac\beta2 u}\,du.
\end{eqnarray*}
Now, note that
$$\disp\left| \frac1{n!}\left(\frac{-Z}{\ln 2}\right)^n\int_{0}^1 \frac\beta2 ({\ln u})^ne^{\frac\beta2 u}\,du\right|\le\frac1{n!}\left(\frac{Z}{\ln 2}\right)^n\frac\beta2 e^{\frac\beta2}\int_{0}^1 |\ln u|^n\,du=\left(\frac{Z}{\ln 2}\right)^n\frac\beta2 e^{\frac\beta2}.$$
We also recall that for positive $\beta$, the pressure is strictly smaller than the topological entropy $\ln 2$.
This shows the lemma.
\end{proof}

\subsection{The eigen-measure on the cylinders $[0]$ and $[1]$}
We remind that  the eigen-probability for $\beta A$,  $\nu_{\beta}$, is a conformal measure:
for any cylinder set $B$ where $\sigma$ is injective
$$
 \nu_{\beta} (\sigma(B)) = \int_B e^{P(\beta)- \beta A(x)} d \, \nu_{\beta} (x).
$$
We shall use this simple relation to compute exact values for $\nu_{\beta}$ of some special cylinders.

For simplicity we drop the subscribe $\beta$ in $\nu_{\beta}$ and simply write $\nu$.
We shall also use the notation $*_0$ for the pair of symbols which are not $0$ and
$*_1$ for the pair of symbols which are not $1$.
Then
$$
 [0*_0]= [01] \sqcup [02] \qquad \text{and} \qquad [1*_1] = [10]\sqcup [12]
$$
(and the unions are disjoint).

We can now estimate the measures of the cylinders $[0]$ and $[1]$.
\begin{lemma}\label{lem-eigen cylindre-couronne}
$$
 \nu[0] = e^{-\frac{\beta}{2}}\, F(P,\beta) \,\nu [0*_0 ]
$$
$$
 \nu[1] = e^{-\frac{3 \beta}{2}}\, F(P,3 \beta) \,\nu [1*_1 ]
$$
\end{lemma}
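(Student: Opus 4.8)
The plan is to cut $[0]$ into the ``rings'' on which $A$ is constant and to push mass from one ring to the next using conformality. For $k\ge 0$ put $R_k:=[0^{k+1}*_0]$, the set of points beginning with \emph{exactly} $k+1$ symbols $0$ followed by a symbol $\neq 0$; the $R_k$ are pairwise disjoint and
$$[0]=\{\0\}\sqcup\bigsqcup_{k\ge0}R_k.$$
I would first record that $\nu$ charges neither $\0$ nor $\1$. Since $\mu=H\nu$ is the equilibrium state of the H\"older potential $\be A$ at finite $\be$, it is a nonatomic Gibbs measure on the full shift, and $H$ is continuous and bounded away from $0$ and $\8$; hence $\nu$ is nonatomic too. (Alternatively, a direct computation gives $\CL\BBone_{\{\0\}}=\BBone_{\{\0\}}$, since $A(\0)=0$ and $\s(\0)=\0$, whence $\nu\{\0\}=e^{P}\nu\{\0\}$, and $P>0$ forces $\nu\{\0\}=0$.) Consequently $\nu[0]=\sum_{k\ge0}\nu(R_k)$.

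On $R_k$ every point has exactly $k+1$ leading zeros, so $d(\cdot,\0)\equiv 2^{-(k+1)}$ and therefore $A\equiv -2^{-(k+1)}$ there. Moreover $\s$ is injective on $R_k$ and $\s(R_k)=R_{k-1}$ for $k\ge1$. The conformality relation then gives, for $k\ge1$,
$$\nu(R_{k-1})=\nu(\s(R_k))=\int_{R_k}e^{P-\be A}\,d\nu=e^{P+\be 2^{-(k+1)}}\,\nu(R_k),$$
that is, $\nu(R_k)=e^{-P-\be 2^{-(k+1)}}\,\nu(R_{k-1})$. Iterating this recursion down to $R_0=[0*_0]$ and using the elementary identity $\sum_{j=1}^{k}2^{-(j+1)}=\frac12-2^{-(k+1)}$, I obtain
$$\nu(R_k)=e^{-kP}\,e^{-\be/2}\,e^{\be 2^{-(k+1)}}\,\nu[0*_0].$$

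Summing over $k\ge0$ and recognizing the series of Definition \ref{def-FPbeta},
$$\nu[0]=\sum_{k\ge0}\nu(R_k)=e^{-\be/2}\Big(\sum_{k\ge0}e^{-kP}e^{\be 2^{-(k+1)}}\Big)\nu[0*_0]=e^{-\be/2}\,F(P,\be)\,\nu[0*_0],$$
which is the first identity. The second is proved in exactly the same way with the rings $[1^{k+1}*_1]$, on which $A\equiv -3\cdot 2^{-(k+1)}$; the telescoping now produces the factor $e^{-3\be/2}$ and the argument $3\be$ inside $F$, giving $\nu[1]=e^{-3\be/2}\,F(P,3\be)\,\nu[1*_1]$. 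The argument is essentially mechanical, and I expect the only delicate points to be (i) confirming that the fixed points carry no $\nu$-mass, so that $\nu[0]$ equals the ring-sum exactly, and (ii) the bookkeeping ensuring that the telescoped factor $\exp\!\big(-\be\sum_{j=1}^{k}2^{-(j+1)}\big)$ collapses to precisely $e^{-\be/2}e^{\be 2^{-(k+1)}}$, the general term of the series defining $F$.
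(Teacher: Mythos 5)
Your proof is correct and follows essentially the same route as the paper: decompose $[0]$ into the rings $[0^{k+1}*_0]$, use conformality to push each ring onto the previous one, and telescope to recognize the series defining $F$. The only difference is that you explicitly verify $\nu\{\0\}=0$ (which the paper leaves implicit when writing $\nu[0]=\sum_n\nu[0^n*_0]$); that is a minor point of extra care, not a different argument.
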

\begin{proof}
Conformality yields
$$
 \nu[0*_0]= \nu[\sigma( 00*_0)]=e^{P+ \frac{\beta}{2^2}} \, \nu[00*_0]=
 e^{2P+  \frac{\beta}{2^3}+\frac{\beta}{2^3}} \, \nu[000*_0],
$$
and so on. By induction we get
\begin{equation}\label{eq1a}
\nu[0*_0]=e^{(n-1)\,P+ \beta \, (\frac{1}{2^2}+...+ \frac{1}{2^n})}\, \nu [\underbrace{00 \ldots 0}_n*_0 ].
\end{equation}

Hence, we get
$$
  \nu[0]= \sum_{n=1}^\infty  \nu [ \underbrace{00 \ldots 0}_n *_0]=
 \sum_{n=1}^\infty e^{-(n-1) \,P\,} \, e^{-\frac{\beta}{2}} \, e^{\frac{\beta}{2^n}}\,\nu [0*_0 ]=  \, e^{-\frac{\beta}{2}}\, F(P,\beta) \,\nu [0*_0 ].
$$

Similarly we get
$\disp
 \nu[1] = \, e^{-\frac{3\, \beta}{2}}\, F(P,3\,\beta) \,\nu [1*_1 ]$.
\end{proof}

Using $[0*_0]= [01] \sqcup [02]$ and$[1*_1] = [10]\sqcup [12]
$ and the conformal property of $\nu$ we obtain the following system:

\begin{subeqnarray}\label{equ-systeme1-eigenmesure}
 \nu [1 *_1 ]&=& \nu[2] \, e^{-P - \frac{3 \beta}{2}} + \nu[0]e^{-P - \frac{3 \beta}{2}}.\slabel{equ1-systm1-eigenmeasure}\\
  \nu [0 *_0 ]&=& \nu[2] \, e^{-P - \frac{ \beta}{2}} + \nu[1]e^{-P - \frac{ \beta}{2}}.\slabel{equ2-systm1-eigenmeasure}
\end{subeqnarray}
This system is the key point to determine the convergence of the eigen-measure.

\begin{proposition}\label{prop-cv-eigenmeas}
The ratio $\disp\frac{ \nu[0 ]}{  \nu[1 ]}$ goes exponentially fast to $+\8$ as $\beta$ goes to $+\8$.
  \end{proposition}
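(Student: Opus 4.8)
The plan is to reduce everything to two scalar quantities, namely
$$G := F(P,\be)\,e^{-P-\be}\qquad\text{and}\qquad K := F(P,3\be)\,e^{-P-3\be},$$
obtain an \emph{exact} closed form for the ratio $\nu[0]/\nu[1]$ in terms of $G$ and $K$ alone, and then show, using Lemma \ref{lem-FZbeta} and Proposition \ref{prop-VetP}, that $G\to+\8$ and $K\to 0$ exponentially fast as $\be\to+\8$.

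First I would feed the system \eqref{equ-systeme1-eigenmesure} into Lemma \ref{lem-eigen cylindre-couronne}. Substituting \eqref{equ2-systm1-eigenmeasure} into the first identity of that lemma and \eqref{equ1-systm1-eigenmeasure} into the second yields the pair
$$\nu[0]=G\,(\nu[2]+\nu[1]),\qquad \nu[1]=K\,(\nu[2]+\nu[0]).$$
The purpose of this form is that $\nu[2]$ can be eliminated purely algebraically. From the two relations one gets $G\nu[2]=\nu[0]-G\nu[1]$ and $K\nu[2]=\nu[1]-K\nu[0]$; equating the two expressions for $\nu[2]$ (legitimate since $\nu[2]>0$) gives $K(\nu[0]-G\nu[1])=G(\nu[1]-K\nu[0])$, which rearranges to the exact identity
$$\frac{\nu[0]}{\nu[1]}=\frac{G\,(1+K)}{K\,(1+G)},$$
valid for every $\be$. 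I stress that this step needs neither the precise value of $\nu[2]$ nor any control on the quantity $1-GK$, since both cancel in the ratio.

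Next I would bound the right-hand side from below. Writing $\disp\frac{\nu[0]}{\nu[1]}=\frac1K\cdot\frac{G}{1+G}\,(1+K)$ and using $1+K\ge 1$, it suffices to show that $G\ge 1$ for $\be$ large (so that $\frac{G}{1+G}\ge\frac12$) and that $K\to0$ exponentially; then $\nu[0]/\nu[1]\ge\frac1{2K}\to+\8$ exponentially. For $G$, Lemma \ref{lem-FZbeta} gives $F(P,\be)\ge \frac1P-C\be e^{\be/2}$, and Proposition \ref{prop-VetP} gives $\frac1P\ge e^{(1+\al-\eps)\be}$ (resp.\ $\frac1P\ge e^{(2-\eps)\be}$ when $\al>1$). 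Since $1+\al>\frac12$, the term $\frac1P$ dominates $\be e^{\be/2}$, so $F(P,\be)\ge\frac1{2P}$ and hence $G\ge \frac{e^{-P-\be}}{2P}\to+\8$.

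Finally, for $K$, I would apply the estimate of Lemma \ref{lem-FZbeta} with the second argument $3\be$ in place of $\be$, the first argument remaining the pressure $P=P(\be)$; the proof of that lemma treats its two arguments independently (it needs only $P<\ln 2$ and the second argument $>2\ln 2$, both satisfied for $\be$ large), so it delivers $F(P,3\be)\le \frac1P+C\be e^{3\be/2}$. Thus $K\le \frac{e^{-P-3\be}}{P}+C\be e^{-P-3\be/2}$. The hard part, such as it is, is this last estimate: which term of $F(P,3\be)$ dominates depends on the position of $\al$ relative to $\frac12$ (the constant contribution $e^{3\be/2}$ wins for $\al<\frac12$, the $\frac1P$ contribution for $\al>\frac12$), so the decay rate of $K$ must be checked by a short case analysis. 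Using Proposition \ref{prop-VetP}, the first term is of order $e^{(\al-2)\be}$ when $\al\le1$ and of order $e^{-\be}$ when $\al>1$, while the second is of order $\be e^{-3\be/2}$; in every case the decay rate stays bounded below by $1-\eps$, so $K\le e^{-(1-\eps)\be}$ for $\be$ large. Combining this with the previous paragraph gives $\disp\frac{\nu[0]}{\nu[1]}\ge\frac1{2K}\ge\frac12 e^{(1-\eps)\be}$, which proves that the ratio tends to $+\8$ exponentially fast.
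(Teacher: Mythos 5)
Your proof is correct and follows essentially the same route as the paper: your exact identity $\nu[0]/\nu[1]=G(1+K)/\bigl(K(1+G)\bigr)$ is precisely the paper's formula \eqref{equ1-rapport-nu-cylindres} rewritten in the variables $G=e^{-P-\be}F(P,\be)$ and $K=e^{-P-3\be}F(P,3\be)$, obtained from the same substitution of the conformality system into Lemma \ref{lem-eigen cylindre-couronne}. The only difference is that you spell out the final asymptotics ($G\to+\8$, $K\lesssim e^{-(1-\eps)\be}$, hence the ratio grows at rate at least $1-\eps$), which the paper asserts in one line by invoking Proposition \ref{prop-VetP} and Lemma \ref{lem-FZbeta}.
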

\begin{proof}
 By Lemma \ref{lem-eigen cylindre-couronne} the system \eqref{equ-systeme1-eigenmesure} can be transformed into a system in $\nu[0]$, $\nu[1]$, and  $\nu[2]$:
\begin{eqnarray*}
 \nu[0] &=&
  e^{-\beta/2}\, F(P,\beta)\,\{\nu[2] \,
  e^{-P - \frac{ \beta}{2}} + \nu[1]e^{-P - \frac{ \beta}{2}}\,\} \\
 \nu[1] &=&
  e^{-(3\, \beta)/2}\, F(P,3\beta)\,
  \{\nu[2] \, e^{-P - \frac{3 \beta}{2}} + \nu[0]e^{-P - \frac{3 \beta}{2}} \,\}
\end{eqnarray*}
This yields

 \begin{equation}
\label{equ1-rapport-nu-cylindres}
 \frac{ \nu[0 ]}{  \nu[1 ]}=e^{2 \beta} \,
  \frac{F(P,\beta) \,( \,1  +  e^{-P-3\,\beta}\, F(P,3\beta)\, )}
{ F(P, 3 \beta)\, (\, 1 + e^{-P-\beta}\, F(P,\beta)\,)  }
\end{equation}

Finally, when $\beta \to \infty$, Proposition \ref{prop-VetP} and  Lemma \ref{lem-FZbeta} show that
$ \disp \frac{ \nu[0 ]}{  \nu[1 ]}$ goes to $+\8$ exponentially fast. The exponential speed is larger than $1-\eps$ for every positive $\eps$.
\end{proof}

We point out that Lemma \ref{lem-eigen cylindre-couronne} also allow to transform  the system \eqref{equ-systeme1-eigenmesure}  into a system in $\nu([0*_{0}])$, $\nu([1*_{1}])$, and  $\nu(2)$. From this system we get

\begin{equation}
\label{equ2-syst-estrelas}
 \frac{ \nu[0 *_0]}{  \nu[1 *_1]}= e^{\, \beta} \,
  \frac{ \, ( \,1 +  e^{-P-3\beta}\, F(P,3\,\beta)\, )}
   { (\, 1 + e^{-P-\beta}\, F(P,\beta)\,)  } .
\end{equation}
Nevertheless, at that point of the proof we do not have enough information on $P$ to conclude which is the limit of the ratio. Proposition \ref{prop-VetP} and Lemma \ref{lem-FZbeta} just allow to ensure that $\disp\frac1\beta\log  \frac{ \nu[0 *_0]}{  \nu[1 *_1]}$  goes to 0.
However, we can get ratios for  other rings:

\begin{corollary}\label{cor-ratio-eigenmes-couronnes}
For every $n\ge 2$,
$$\disp
 \frac{ \nu[0^n *_0]}{  \nu[1^n *_1]}=e^{ \beta(1-\frac{1}{2^{n-1}})} \frac{ \,\nu[ 0*_0]}{\,  \nu[1 *_1]}.$$
The ratio $\disp \frac{ \nu[0^n *_0]}{  \nu[1^n *_1]}$ goes to $+\8$ as $\beta$ goes to $+\8$ with exponential speed larger than $(1-\frac{1}{2^{n-1}})-\eps$ for every positive $\eps$.
\end{corollary}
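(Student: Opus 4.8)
The plan is to reduce both ring-measures to the single rings $[0*_0]$ and $[1*_1]$ by iterating conformality, exactly as in the proof of Lemma~\ref{lem-eigen cylindre-couronne}, and then divide. First I would record the two induction formulas. For the symbol $0$, a point of $[0^n*_0]$ is at distance $1/2^n$ from $0^\8$, so $A\equiv -1/2^n$ there; applying conformality to $B=[0^n*_0]$, for which $\sigma(B)=[0^{n-1}*_0]$, and iterating down to $n=1$ gives precisely \eqref{eq1a},
\begin{equation*}
\nu[0*_0]=e^{(n-1)P+\beta\left(\frac1{2^2}+\cdots+\frac1{2^n}\right)}\,\nu[0^n*_0].
\end{equation*}
For the symbol $1$ the only difference is the slope: the potential on $[1^n*_1]$ equals $-3/2^n$, so the same iteration yields
\begin{equation*}
\nu[1*_1]=e^{(n-1)P+3\beta\left(\frac1{2^2}+\cdots+\frac1{2^n}\right)}\,\nu[1^n*_1].
\end{equation*}

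Next I would take the quotient. The decisive point is that both prefactors carry the identical pressure contribution $e^{(n-1)P}$, so it cancels and no information about $P$ is needed here; what remains is the difference of the two slopes times the geometric sum,
\begin{equation*}
\frac{\nu[0^n*_0]}{\nu[1^n*_1]}=\exp\!\left[(3-1)\beta\left(\frac1{2^2}+\cdots+\frac1{2^n}\right)\right]\frac{\nu[0*_0]}{\nu[1*_1]}.
\end{equation*}
Since $\sum_{k=2}^{n}2^{-k}=\frac12-\frac1{2^n}$ and $2\left(\frac12-\frac1{2^n}\right)=1-\frac1{2^{n-1}}$, the exponent is exactly $\beta\left(1-\frac1{2^{n-1}}\right)$, which is the claimed identity.

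For the asymptotics I would use the expression \eqref{equ2-syst-estrelas} for the base ratio $\nu[0*_0]/\nu[1*_1]$: as noted right after that equation, Proposition~\ref{prop-VetP} and Lemma~\ref{lem-FZbeta} guarantee that $\frac1\beta\log\frac{\nu[0*_0]}{\nu[1*_1]}\to 0$, i.e. the base ratio is sub-exponential in $\beta$. Taking $\frac1\beta\log$ of the identity then gives
\begin{equation*}
\frac1\beta\log\frac{\nu[0^n*_0]}{\nu[1^n*_1]}=\left(1-\frac1{2^{n-1}}\right)+\frac1\beta\log\frac{\nu[0*_0]}{\nu[1*_1]},
\end{equation*}
whose right-hand side tends to $1-\frac1{2^{n-1}}$ as $\beta\to+\8$. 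Hence the exponential speed is exactly $1-\frac1{2^{n-1}}$, in particular larger than $1-\frac1{2^{n-1}}-\eps$ for every $\eps>0$, and the ratio diverges to $+\8$.

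There is essentially no hard step here: the argument is a direct iteration of conformality. The only places requiring care are bookkeeping the two slopes ($1$ versus $3$) correctly in the two induction formulas — it is precisely their difference that produces the surviving factor — and observing that the pressure terms cancel, which is what makes the final formula clean and independent of the fine asymptotics of $P$.
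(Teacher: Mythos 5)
Your proposal is correct and follows essentially the same route the paper intends: iterate conformality to get \eqref{eq1a} and its analogue for the $1$-cylinders (with slope $3$), take the quotient so the $e^{(n-1)P}$ factors cancel, and control the base ratio $\nu[0*_0]/\nu[1*_1]$ via \eqref{equ2-syst-estrelas}, Proposition~\ref{prop-VetP} and Lemma~\ref{lem-FZbeta}. One small caveat: your claim that the speed is \emph{exactly} $1-\frac1{2^{n-1}}$ relies on $\frac1\beta\log\frac{\nu[0*_0]}{\nu[1*_1]}\to 0$, which holds for $\al\ge 1$ but not for $\al<1$ (where the base ratio itself grows like $e^{(1-\al)\beta}$, cf.\ Corollary~\ref{coro-lim-ratio-estrella}); this only makes the true speed larger, so the stated lower bound $(1-\frac1{2^{n-1}})-\eps$ is unaffected.
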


\subsection{Convergence of the eigen-measure}
In this subsection we get a finer estimate for $P(\beta)$ and conclude that $\nu$ goes to the Dirac measure $\delta_{\0}$.

The conformal property yields

  \begin{equation}
\label{equ1-nu2}
 \nu([2]) = \nu([20])+\nu([21]) +\nu([22]) = e^{-P-\al\be}(\nu[0]+\nu[1]+\nu[2] ) =
    e^{-P-\al \be} .
\end{equation}
On the other hand the solution of the system obtained in the proof of Proposition \ref{prop-cv-eigenmeas} shows that we have
\begin{eqnarray*}
\nu([0])&=& \frac{1+e^{-P-3\beta}F(P,3\beta)}{1-e^{-2P}F(P,\beta)F(P,3\beta)e^{-4\beta}}F(P,\beta)e^{-P-\beta}\nu([2]),\\
\nu([1])&=& \frac{1+e^{-P-\beta}F(P,\beta)}{1-e^{-2P}F(P,\beta)F(P,3\beta)e^{-4\beta}}F(P,3\beta)e^{-P-3\beta}\nu([2]).
\end{eqnarray*}
Using the formula $\nu([0])+\nu([1])+\nu([2])=1$ we get another expression for $\nu([2])$:
\begin{eqnarray}
&&1=\nu([2])\left(1+ \frac{1+e^{-P-3\beta}F(P,3\beta)}{1-e^{-2P}F(P,\beta)F(P,3\beta)e^{-4\beta}}F(P,\beta)e^{-P-\beta}+\right.\nonumber\\
&&\hskip 3cm\left. \frac{1+e^{-P-\beta}F(P,\beta)}{1-e^{-2P}F(P,\beta)F(P,3\beta)e^{-4\beta}}F(P,3\beta)e^{-P-3\beta}\right)\nonumber\\
&&=\nu([2])\left( \frac{1+e^{-P-\beta}F(P,\beta)+e^{-P-3\beta}F(P,3\beta)+e^{-2P-4\beta}F(P,\beta)F(P,3\beta)}{1-e^{-2P}F(P,\beta)F(P,3\beta)e^{-4\beta}}\right).\label{equ2-nu2}
\end{eqnarray}
Lemma \ref{lem-FZbeta} and Proposition \ref{prop-VetP} show that whatever the value of $\alpha$ is, $e^{-P-3\beta}F(P,3\beta)$
 goes to $0$ as $\beta$ goes to $+\8$. On the other hand, $e^{-P-\beta}F(P,\beta)$ is exponentially big
 (of order $e^{\beta}$ if $\alpha$ is bigger than 1 and $e^{\alpha\beta}$ if $\alpha$ is smaller than 1).
Remember that Equation \eqref{equ1-nu2} shows that $\nu([2])$ goes exponentially fast to $0$ with exponential speed  $-\alpha\beta$.
\begin{lemma}
\label{lem-vitesse-fine-P}
If $\alpha> 1$, we get $\disp \lim_{\beta\rightarrow+\8}P(\beta)e^{2\beta}=1$. For $\alpha=1$, $P(\beta)e^{2\be}$ goes to $\frac{1+\sqrt5}2$.
\end{lemma}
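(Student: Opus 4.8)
The plan is to collapse the two expressions for $\nu([2])$, namely \eqref{equ1-nu2} and \eqref{equ2-nu2}, into a single exact scalar equation and then read off the asymptotics of $P$ from it. Writing $a := e^{-P-\beta}F(P,\beta)$ and $b := e^{-P-3\beta}F(P,3\beta)$, I first observe that the numerator of the fraction in \eqref{equ2-nu2} factors as $1+a+b+ab = (1+a)(1+b)$, while the denominator is exactly $1-ab$. Substituting $\nu([2]) = e^{-P-\alpha\beta}$ from \eqref{equ1-nu2} then turns \eqref{equ2-nu2} into the master relation
$$e^{P+\alpha\beta}\,(1-ab) = (1+a)(1+b). \qquad (\star)$$
It is convenient to set $c := c(\beta) = P(\beta)e^{2\beta}$; the whole lemma amounts to showing $c \to 1$ when $\alpha>1$ and $c \to \rho$ when $\alpha=1$.

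Next I would pin down the sizes of $a$, $b$ and $ab$. By Proposition \ref{prop-VetP} we have $\frac1\beta\log P \to -2$ both for $\alpha>1$ and for $\alpha=1$ (since $1+\alpha=2$ there), hence $\frac1\beta\log c \to 0$, i.e. $c$ is subexponential. Feeding $P \asymp e^{-2\beta}$ into Lemma \ref{lem-FZbeta} (applied with second argument $\beta$, then $3\beta$), the error terms $\frac{\beta e^{\beta/2}}{2\ln 2}P$ and $\frac{3\beta e^{3\beta/2}}{2\ln 2}P$ both tend to $0$; therefore $P\,F(P,\beta) \to 1$ and $P\,F(P,3\beta) \to 1$. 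Consequently, using $e^{-P}\to 1$,
$$a = \frac{e^{-P}e^{\beta}}{c}\,(1+o(1)), \qquad b = \frac{e^{-P}e^{-\beta}}{c}\,(1+o(1)) \to 0, \qquad ab = \frac{1}{c^2}\,(1+o(1)).$$

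With these estimates I would extract the limit of $c$ by a compactness argument. Since $(\star)$ and $(1+a)(1+b)$ are positive, $1-ab>0$, so $ab<1$; combined with $ab = c^{-2}(1+o(1))$ this gives $\liminf c \ge 1$. To bound $c$ from above, suppose $c\to\infty$ along a subsequence; then $ab\to 0$ and $(\star)$ reduces to $e^{P+\alpha\beta}=(1+a)(1+b)(1+o(1)) = \frac{e^{\beta}}{c}(1+o(1))$, forcing $c = e^{(1-\alpha)\beta}(1+o(1))$, which tends to $0$ (for $\alpha>1$) or to $1$ (for $\alpha=1$), a contradiction. Hence $c$ lives in a compact subset of $(0,\infty)$, and it suffices to identify its accumulation points. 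Dividing $(\star)$ by $e^\beta$ gives
$$e^{P}\,e^{(\alpha-1)\beta}\,(1-ab) = \frac{(1+a)(1+b)}{e^\beta},$$
whose right-hand side tends to $1/c_\infty$ along any subsequence with $c\to c_\infty$. For $\alpha=1$ the left-hand side tends to $1-c_\infty^{-2}$, so $c_\infty$ solves $1 - c_\infty^{-2} = c_\infty^{-1}$, i.e. $c_\infty^2 - c_\infty - 1 = 0$, whose positive root is $\rho = \frac{1+\sqrt5}{2}$. For $\alpha>1$ the factor $e^{(\alpha-1)\beta}$ blows up, so finiteness of the right-hand side forces $1-c_\infty^{-2}=0$, i.e. $c_\infty=1$. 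In either case the accumulation point is unique, whence $c$ converges to it, which is exactly the assertion.

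The main obstacle is this last passage to the limit for $\alpha>1$, where the left-hand side of the divided equation is an indeterminate product $\infty\cdot 0$; the accumulation-point formulation circumvents it by only requiring that no accumulation point other than $1$ can occur. The other point demanding care is the uniformity of the $o(1)$ terms: one must check, using the subexponential bound on $c$ coming from Proposition \ref{prop-VetP}, that the error estimates of Lemma \ref{lem-FZbeta} for both $F(P,\beta)$ and $F(P,3\beta)$ are genuinely negligible against $1/P \asymp e^{2\beta}$, which is what makes $P\,F \to 1$ rigorous.
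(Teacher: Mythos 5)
Your proof is correct and follows essentially the same route as the paper: both combine the conformality identity $\nu([2])=e^{-P-\alpha\beta}$ with the normalization equation \eqref{equ2-nu2} and the estimate $P\,F(P,\cdot)\to 1$ from Lemma \ref{lem-FZbeta}, then identify the accumulation points of $P(\beta)e^{2\beta}$. Your write-up merely makes explicit (via the factorization $(1+a)(1+b)$ and the compactness argument) some steps the paper treats tersely, especially in the case $\alpha>1$.
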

\begin{proof}
We first do the case $\alpha>1$.
As we said above, the numerator in the right hand side of \eqref{equ2-nu2} has order $e^{\beta}$. On the other hand $\nu([2])$ has order $e^{-\al\be}$. Therefore, the denominator of the right hand side of  \eqref{equ2-nu2}  goes to 0 with exponential speed $e^{(1-\alpha)\beta}$.
Then, Lemma \ref{lem-FZbeta} shows that $P(\beta)e^{-2\beta}$ goes to $1$.

Let us now deal with the case $\alpha=1$. Copying what we did above we get
$$e^{P}=\frac{e^{-2\be}}P\frac{1+\eps_{1}(\beta)}{1-e^{-2P}\left(\frac{e^{-2\beta}}{P}\right)^2(1+\eps_{2}(\beta))},$$
with $\eps_{i}(\beta)$ going to 0 as $\beta$ goes to $+\8$. Let $l$ be any accumulation point for $\disp {P}e^{2\beta}$. We thus get
$$=\frac{\frac1l}{1-\frac1{l^2}}=\frac{l}{l^2-1}.$$
 This yields  $l=\frac{1+\sqrt5}2$.
\end{proof}

\begin{corollary}
\label{coro-lim-ratio-estrella}
As $\beta$ goes to $+\8$, the ratio $\disp\frac{ \nu[0 *_0]}{  \nu[1 *_1]}$ goes to 1 for $\alpha>1$ to $\frac{\sqrt5+1}2$ for $\alpha=1$ and to $+\8$ for $\alpha<1$. The convergence is non-exponential for $\al\ge 1$ and has exponential speed $1-\al$ if $\al<1$.
\end{corollary}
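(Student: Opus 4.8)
The starting point is formula \eqref{equ2-syst-estrelas}, which I rewrite as
$$\frac{\nu[0*_0]}{\nu[1*_1]}=e^{\be}\,\frac{1+b}{1+a},\qquad a:=e^{-P-\be}F(P,\be),\quad b:=e^{-P-3\be}F(P,3\be).$$
The whole content of the corollary is that this quantity is asymptotic to $P\,e^{2\be}$; once this is shown, the three regimes fall out directly from the pressure asymptotics already established. So the plan is: show that $a\to+\8$ and $b\to 0$, deduce $\disp\frac{\nu[0*_0]}{\nu[1*_1]}\sim e^{\be}/a$, simplify this to $P\,e^{2\be}e^{P}(1+o(1))$, and read off the limit from Lemma \ref{lem-vitesse-fine-P} and Proposition \ref{prop-VetP}.

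First I would control $a$ and $b$ with Lemma \ref{lem-FZbeta}, which gives $F(P,\be)=\frac1P+O(\be e^{\be/2})$ and $F(P,3\be)=\frac1P+O(\be e^{3\be/2})$. By Proposition \ref{prop-VetP} one has $\frac1\be\log\frac1P\to 1+\al$ when $\al\le1$, and by Lemma \ref{lem-vitesse-fine-P} one has $\frac1P\sim e^{2\be}$ when $\al>1$; in either case $\frac1\be\log\frac1P$ tends to a value $>\frac12$, so $\frac1P$ dominates the correction $\be e^{\be/2}$ and $F(P,\be)=\frac1P(1+o(1))$. Hence
$$a=e^{-P-\be}F(P,\be)=\frac{e^{-P}}{P}\,e^{-\be}(1+o(1)),$$
and since $\frac1P e^{-\be}$ has exponential order $e^{\al\be}$ for $\al\le1$ and $e^{\be}$ for $\al>1$, we get $a\to+\8$. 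For $b$ I only need it to vanish: bounding $F(P,3\be)\le\frac1P+C\be e^{3\be/2}$ gives $b\le \frac{e^{-3\be}}{P}+C\be e^{-3\be/2}$, and both terms tend to $0$ because $\frac1\be\log\frac1P$ tends to $1+\al$ or $2$, both $\le2$, so eventually $\frac1\be\log\frac{e^{-3\be}}{P}<0$. I deliberately use only this upper bound: for $\al<\frac12$ the term $\frac1P$ no longer dominates inside $F(P,3\be)$, so the asymptotic $F(P,3\be)\sim\frac1P$ would be false, but it is not needed here.

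With $a\to+\8$ and $b\to0$, the elementary identity $\disp\frac{1+b}{1+a}=\frac{1}{a}\,(1+b)\,\frac{1}{1+1/a}$ shows $\disp\frac{1+b}{1+a}\sim\frac1a$, whence
$$\frac{\nu[0*_0]}{\nu[1*_1]}=\frac{e^{\be}}{a}(1+o(1))=\frac{e^{2\be}e^{P}}{F(P,\be)}(1+o(1))=P\,e^{2\be}\,e^{P}(1+o(1)).$$
As $P\to0$ forces $e^{P}\to1$, the ratio is asymptotic to $P\,e^{2\be}$, and I conclude case by case. For $\al>1$, Lemma \ref{lem-vitesse-fine-P} gives $Pe^{2\be}\to1$, so the ratio tends to $1$; for $\al=1$ the same lemma gives $Pe^{2\be}\to\frac{1+\sqrt5}2$, so it tends to $\frac{\sqrt5+1}2$. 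In both cases the limit is a positive constant, hence $\frac1\be\log\frac{\nu[0*_0]}{\nu[1*_1]}\to0$ and the convergence is non-exponential. For $\al<1$, Proposition \ref{prop-VetP} gives $\frac1\be\log(Pe^{2\be})\to 1-\al>0$, so the ratio diverges to $+\8$ and $\frac1\be\log\frac{\nu[0*_0]}{\nu[1*_1]}\to1-\al$, i.e. with exponential speed $1-\al$.

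The only real work is the bookkeeping of the error terms from Lemma \ref{lem-FZbeta}: one must verify in each regime that $\frac1P$ dominates the $O(\be e^{\be/2})$ correction in $F(P,\be)$ (it always does, since $1+\al>\frac12$), while for $F(P,3\be)$ this domination fails for small $\al$ and is replaced by the one-sided bound above, which is all that $b\to0$ requires. Beyond this, the statement is a direct substitution of the pressure asymptotics into the asymptotic $\disp\frac{\nu[0*_0]}{\nu[1*_1]}\sim P\,e^{2\be}$.
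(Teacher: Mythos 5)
Your proof is correct and follows essentially the same route as the paper's: it starts from Equation \eqref{equ2-syst-estrelas}, shows that the numerator's correction $e^{-P-3\beta}F(P,3\beta)$ vanishes while the denominator is dominated by $e^{-\beta}/P$, so the ratio is asymptotic to $P e^{2\beta}$, and then invokes Lemma \ref{lem-vitesse-fine-P} and Proposition \ref{prop-VetP}. Your bookkeeping of the error terms from Lemma \ref{lem-FZbeta} (in particular the observation that $F(P,3\beta)\sim 1/P$ may fail for $\alpha<\tfrac12$ but a one-sided bound suffices for $b\to 0$) is more explicit than the paper's one-line argument, but the underlying reasoning is identical.
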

\begin{proof}
We remind that Equation \eqref{equ2-syst-estrelas} gives
$$ \frac{ \nu[0 *_0]}{  \nu[1 *_1]}= e^{\, \beta} \,
  \frac{ \, ( \,1 +  e^{-P-3\beta}\, F(P,3\,\beta)\, )}
   { (\, 1 + e^{-P-\beta}\, F(P,\beta)\,)  } .
$$
We already know that $e^{-3\beta}F(P,3\beta)$ goes to $0$ as $\beta$ goes to $+\8$. The denominator has for dominating term $\frac{e^{-\beta}}P$. For $\alpha<1$ we directly get that $\disp\frac{ \nu[0 *_0]}{  \nu[1 *_1]}$ goes to $+\8$. For $\alpha\ge 1$ we use Lemma \ref{lem-vitesse-fine-P}.
\end{proof}

Equation \ref{equ1-nu2} shows that $\nu([2])$ goes to $0$ as $\beta$ goes to $+\8$. Then Proposition \ref{prop-cv-eigenmeas} yields:

\begin{corollary}
\label{coro-cv-eigenmes}
The measure $\nu$ goes to the Dirac measure $\delta_{\0}$ as $\beta$ goes to $+\8$.
\end{corollary}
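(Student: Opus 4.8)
The plan is to upgrade the two coarse facts already available---$\nu[2]\to 0$ (Equation~\eqref{equ1-nu2}) and $\nu[0]/\nu[1]\to+\8$ (Proposition~\ref{prop-cv-eigenmeas})---into genuine concentration at the single point $\0$. First I would note that, since $\nu[0]+\nu[1]+\nu[2]=1$, these two facts force $\nu[1]\to 0$ and $\nu[0]\to 1$: indeed $\nu[1]=\nu[0]\cdot(\nu[1]/\nu[0])\le \nu[1]/\nu[0]\to 0$, and then $\nu[0]=1-\nu[1]-\nu[2]\to 1$.

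The decisive remark is that $\nu[0]\to 1$ is \emph{not} enough to conclude, because the mass inside $[0]$ could a priori remain spread along the ring without approaching $\0$. Since the cylinders $[0^m]$ form a neighbourhood basis of $\0$ and are clopen in $\S$ (hence continuity sets for every weak-$*$ accumulation point), convergence of $\nu_\beta$ to $\delta_{\0}$ is equivalent to the statement that $\nu[0^m]\to 1$ for every fixed $m$. To reach it I would use the disjoint decomposition
\[
 [0]=[0^m]\ \sqcup\ \bigsqcup_{k=1}^{m-1}[0^k *_0],
\]
which gives $\nu[0^m]=\nu[0]-\sum_{k=1}^{m-1}\nu[0^k *_0]$. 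By Equation~\eqref{eq1a} each ring satisfies $\nu[0^k *_0]\le \nu[0*_0]$, so for $m$ fixed it is enough to establish the single decay $\nu[0*_0]\to 0$.

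To prove $\nu[0*_0]\to 0$ I would invoke Lemma~\ref{lem-eigen cylindre-couronne}, which rearranges into $\nu[0*_0]=e^{\beta/2}\,\nu[0]/F(P,\beta)\le e^{\beta/2}/F(P,\beta)$. Lemma~\ref{lem-FZbeta} gives $F(P,\beta)=\frac1P+O(\beta e^{\beta/2})$, while Proposition~\ref{prop-VetP} yields $\frac1\beta\log P\to-(1+\al)$ for $\al\le 1$ and $-2$ for $\al>1$, so $\frac1P$ grows like $e^{(1+\al)\beta}$ (resp.\ $e^{2\beta}$). As $1+\al>\frac12$ for every $\al>0$, the term $\frac1P$ dominates both $\beta e^{\beta/2}$ and $e^{\beta/2}$; hence $F(P,\beta)\sim \frac1P$ and $e^{\beta/2}/F(P,\beta)\to 0$, so $\nu[0*_0]\to 0$. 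Combined with $\nu[0]\to 1$ this gives $\nu[0^m]\to 1$ for each $m$, and the corollary follows for all $\al>0$.

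I expect the main obstacle to be conceptual rather than computational: the naive implication ``$\nu[0]\to 1$, therefore $\nu\to\delta_{\0}$'' is false, and the real content is the decay $\nu[0*_0]\to 0$, i.e.\ that the growth of $F(P,\beta)$ (equivalently the smallness of the pressure $P$) outpaces the factor $e^{\beta/2}$. This is precisely where the pressure asymptotics of Proposition~\ref{prop-VetP} and the sharp estimate of Lemma~\ref{lem-FZbeta} are indispensable; the remaining steps are bookkeeping with the product topology and the ring decomposition.
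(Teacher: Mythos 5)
Your proof is correct, and it is in fact more complete than the paper's own argument. The paper deduces the corollary in one line from exactly the two inputs you start with, namely $\nu[2]\to 0$ (Equation~\eqref{equ1-nu2}) and $\nu[0]/\nu[1]\to+\8$ (Proposition~\ref{prop-cv-eigenmeas}), and leaves implicit the point you correctly identify as the real content: since $\nu_\beta$ is only conformal and not $\sigma$-invariant, its weak-$*$ accumulation points are not automatically supported on the fixed points, so $\nu[0]\to 1$ by itself does not give convergence to $\delta_{\0}$. Your ring decomposition $[0]=[0^m]\sqcup\bigsqcup_{k=1}^{m-1}[0^k*_0]$, the monotonicity $\nu[0^k*_0]\le\nu[0*_0]$ read off from \eqref{eq1a} (the exponential prefactor there is $\ge 1$ because $P>0$), and the decay $\nu[0*_0]\le e^{\beta/2}/F(P,\beta)\sim P\,e^{\beta/2}\to 0$ supply exactly the missing concentration estimate, and they use only Lemma~\ref{lem-eigen cylindre-couronne}, Lemma~\ref{lem-FZbeta} and Proposition~\ref{prop-VetP}, all available at this point of the paper. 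So the route is the same as the paper's, with one additional, genuinely needed step carefully justified; no gaps.
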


\section{The eigen-function $H$}\label{sec-eigen-function}

 In this section we get estimates at the non-exponential scale for the asymptotic behavior of the eigenfunction
 $H_{\be}$. In what follows, for simplicity, we will drop the
 subindex $\be$.

 \subsection{The exponential scale is not deterministic}

 We know  that
 \begin{equation}\label{equ-def-H}
     H(x) = \lim_{N \to \infty} \frac{1}{N} \sum_{k=0}^{N-1} \frac{\CL^k(\BBone)(x)}{e^{kP}}
\end{equation}
where $\CL$ is the transfer operator (see Subsection \ref{subsec-notation}).
 We recall that $*_0$ (resp. $*_1$) denotes any symbol different to 0 (resp. to $1$).
 We start with the following result.
\begin{lemma}\label{lem-H-loc const}
  The eigen-function is constant on cylinders $[0^n*_0]$, $[1^n*_1]$ and $[2]$.
 \end{lemma}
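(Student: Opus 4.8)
The plan is to work directly from the representation \eqref{equ-def-H} of the eigen-function as a Ces\`aro limit of $\CL^k(\BBone)/e^{kP}$. Since the normalization $e^{kP}$ does not depend on $x$, it suffices to prove that for each fixed $k$ the function $x\mapsto \CL^k(\BBone)(x)$ is already constant on each of the cylinders $[0^n*_0]$, $[1^n*_1]$ and $[2]$; constancy of $H$ on these sets then follows by averaging over $k$ and passing to the limit. First I would expand the iterate of the transfer operator as a sum over finite words. Writing $w=(w_0,\ldots,w_{k-1})$ for a word of length $k$ over $\{0,1,2\}$ and $wx=(w_0,\ldots,w_{k-1},x_0,x_1,\ldots)$ for its concatenation with $x$, one has
$$\CL^k(\BBone)(x)=\sum_{|w|=k}\exp\Big(\be\sum_{j=0}^{k-1}A(\si^j(wx))\Big),$$
because the preimages of $x$ under $\si^k$ are exactly the points $wx$, and the weight attached to $wx$ is the exponential of $\be$ times its Birkhoff sum of length $k$. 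Everything thus reduces to understanding how each term $A(\si^j(wx))$ depends on $x$.

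The key observation, and the heart of the argument, is that each value $A(\si^j(wx))$ depends only on the word $w$ (and the index $j$) together with the length of the leading run of $x$, and not on the tail of $x$. Indeed, $\si^j(wx)=(w_j,\ldots,w_{k-1},x_0,x_1,\ldots)$, and the three branches of $A$ are each governed by a distance that is read off from the first symbol differing from the reference point $\0$ or $\1$ (the branch through $[2]$ being the constant $-\al$). When $w_j=0$, the value $-d(\si^j(wx),\0)$ equals $-2^{-m}$, where $m$ is the position of the first non-zero symbol of $\si^j(wx)$: either such a symbol already occurs among $w_{j+1},\ldots,w_{k-1}$, in which case $m$ depends on $w$ alone, or all of $w_j,\ldots,w_{k-1}$ equal $0$ and the run continues into $x$, in which case $m=k-j+n$ on the cylinder $[0^n*_0]$ --- a quantity fixed by $k$, $j$ and $n$. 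The case $w_j=1$ is identical with $\0,0$ replaced by $\1,1$, and the case $w_j=2$ gives the constant $-\al$. In every case the value is determined by $w$, $j$ and the cylinder containing $x$.

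Consequently each Birkhoff sum $\sum_{j=0}^{k-1}A(\si^j(wx))$ is constant as $x$ ranges over a fixed cylinder $[0^n*_0]$, and likewise over $[1^n*_1]$ and over $[2]$ (which is simply the degenerate case $n=0$ of no leading run). Summing over $w$ shows that $\CL^k(\BBone)$ is constant on each such cylinder for every $k$, and the Ces\`aro average in \eqref{equ-def-H} inherits this property, which proves the lemma. The one point requiring care is the boundary case in which $w$ consists entirely of the matching symbol, so that the distance computation spills over from $w$ into $x$; there one must verify that it halts at the first symbol of $x$ lying outside the leading run --- precisely where membership in $[0^n*_0]$ (resp. $[1^n*_1]$, resp. $[2]$) is invoked --- and that no symbol of $x$ beyond position $n$ ever intervenes. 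This bookkeeping is the main (and essentially the only) obstacle, and it is entirely elementary.
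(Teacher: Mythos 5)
Your proof is correct and follows essentially the same route as the paper: reduce via the Ces\`aro representation \eqref{equ-def-H} to showing that each $\CL^k(\BBone)$ is constant on the cylinders in question, expand $\CL^k(\BBone)(x)$ as a sum of $e^{\be S_k(A)(wx)}$ over words $w$ of length $k$, and use that $A$ is constant on every set of the form $[0^m*_0]$, $[1^m*_1]$, $[2]$. The only difference is that you spell out the bookkeeping (that $\si^j(wx)$ lands in the same such set for all $x$ in a fixed $[0^n*_0]$, including the boundary case where the run of matching symbols spills from $w$ into $x$), which the paper leaves implicit.
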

\begin{proof}
  Owing to Equation \ref{equ-def-H}, it is sufficient to prove that for every $k$, $\CL^k(\BBone)$ is constant on cylinders $[0^n*_0]$, $[1^n*_1]$ and $[2]$.
  For $x$ in $\S$, we get
  $$\CL^k_\beta(\BBone)(x)=\sum_{z\in\{0,1,2\}^k}e^{\be.S_k(A)(zx)},$$
  where $S_k(A)$ is the Birkhoff sum $A+A\circ\s+\ldots+A\circ^{k-1}\s$. Now, note that the potential
  is constant on the cylinders $[0^m*_0]$, $[1^m*_1]$  (whatever $m\ge 1$ is) and $[2]$. This finishes the proof of the lemma.
\end{proof}

We emphasize here that the information we get on the subaction (namely the exponential asymptotic for $H$) and on the eigen-measure  are not yet sufficient to conclude the proof. Indeed, one important fact is that the eigen-measure and
the eigen-function have opposite behavior:

\begin{lemma}\label{lem-sens-inverse}
For $\al\ge1$ and for every integer $n\ge 1$, $\disp \lim_{\be\to+\8}\frac1\be\log\frac{\mu([0^n*_0])}{\mu([1^n*_1])}=0$.
\end{lemma}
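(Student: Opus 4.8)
The plan is to factor the measure ratio into an eigen-measure part and an eigen-function part, and to show that these two parts carry exactly opposite exponential rates, so that their product is sub-exponential. Since $\mu=H\,d\nu$ and, by Lemma \ref{lem-H-loc const}, $H$ is constant on each cylinder $[0^n*_0]$ and $[1^n*_1]$, I would write
$$\frac{\mu([0^n*_0])}{\mu([1^n*_1])}=\frac{H|_{[0^n*_0]}}{H|_{[1^n*_1]}}\cdot\frac{\nu([0^n*_0])}{\nu([1^n*_1])},$$
and therefore
$$\frac1\beta\log\frac{\mu([0^n*_0])}{\mu([1^n*_1])}=\frac1\beta\log\frac{H|_{[0^n*_0]}}{H|_{[1^n*_1]}}+\frac1\beta\log\frac{\nu([0^n*_0])}{\nu([1^n*_1])}.$$
It then suffices to compute the exponential rate of each summand separately.

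For the eigen-measure term, Corollary \ref{cor-ratio-eigenmes-couronnes} gives, for $n\ge 2$, that $\frac{\nu([0^n*_0])}{\nu([1^n*_1])}=e^{\beta(1-1/2^{n-1})}\frac{\nu([0*_0])}{\nu([1*_1])}$, while Corollary \ref{coro-lim-ratio-estrella} shows that for $\alpha\ge 1$ the base ratio $\frac{\nu([0*_0])}{\nu([1*_1])}$ tends to a finite positive constant and hence contributes nothing at the exponential scale. Consequently this summand tends to $1-\frac1{2^{n-1}}$, the case $n=1$ being covered directly by Corollary \ref{coro-lim-ratio-estrella}.

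For the eigen-function term, I would use that every accumulation point of $\frac1\beta\log H_\beta$ is the calibrated subaction $V$ and evaluate $V$ on the two cylinders through Lemma \ref{lem-formule-V}. On $[0^n*_0]$ one has $d(x,0^\infty)=2^{-n}$ and $d(x,1^\infty)=1$, while on $[1^n*_1]$ one has $d(x,1^\infty)=2^{-n}$ and $d(x,0^\infty)=1$; this also reconfirms that $V$, like $H$, is constant on these cylinders. Normalizing $V(0^\infty)=0$ and using Proposition \ref{prop-VetP} (so that $V(1^\infty)=1$ for $\alpha\ge 1$), I obtain
$$V|_{[0^n*_0]}=\max\{-2^{-n},-2\}=-2^{-n},\qquad V|_{[1^n*_1]}=\max\{-1,\,1-3\cdot2^{-n}\}=1-3\cdot2^{-n},$$
whence $V|_{[0^n*_0]}-V|_{[1^n*_1]}=2^{-(n-1)}-1$. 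Since this difference is independent of the additive normalization of $V$, the second summand converges to $\frac1{2^{n-1}}-1$.

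Adding the two rates gives $(1-\frac1{2^{n-1}})+(\frac1{2^{n-1}}-1)=0$, which is exactly the claim. The one delicate point is the passage from \emph{accumulation point} to genuine convergence of $\frac1\beta\log\frac{H|_{[0^n*_0]}}{H|_{[1^n*_1]}}$ along the full parameter $\beta\to+\infty$: for this I would invoke the precompactness of the family $\{\frac1\beta\log H_\beta\}$ together with the fact that every limit is a calibrated subaction, which by Lemma \ref{lem-formule-V} and Proposition \ref{prop-VetP} yields the same difference $V|_{[0^n*_0]}-V|_{[1^n*_1]}$ on these cylinders; since all accumulation points coincide, the full sequence converges to that common value. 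This is the main (though mild) obstacle, the remainder being the bookkeeping of distances and the two cited corollaries.
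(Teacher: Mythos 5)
Your proposal is correct and follows essentially the same route as the paper: factor $\mu([\cdot])=H\nu([\cdot])$, get the rate $1-\frac1{2^{n-1}}$ for the eigen-measure ratio from Corollaries \ref{cor-ratio-eigenmes-couronnes} and \ref{coro-lim-ratio-estrella}, and the opposite rate $-1+\frac1{2^{n-1}}$ for the eigen-function ratio from Lemma \ref{lem-formule-V} and Proposition \ref{prop-VetP}. Your explicit evaluation of $V$ on the two cylinders and your remark on passing from accumulation points to genuine convergence are details the paper leaves implicit, but the argument is the same.
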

\begin{proof}
By definition we get $\disp\frac{\mu([0^n*_0])}{\mu([1^n*_1])}=\frac{H(0^n*_0)\nu([0^n*_0])}{H(1^n*_1)\nu([1^n*_1])}$. Using Corollaries \ref{cor-ratio-eigenmes-couronnes} and \ref{coro-lim-ratio-estrella}, we get that $\disp \frac1\be\log\frac{\nu([0^n*_0])}{\nu([1^n*_1])}$ goes to $\disp 1-\frac{1}{2^{n-1}}$ as $\be$ goes to $+\8$.

On the other hand, Lemma \ref{lem-formule-V} and Proposition \ref{prop-VetP} shows that $\disp \frac1\be\log\frac{H(0^n*_0)}{H(1^n*_1)}$ goes to $\disp-1+\frac{1}{2^{n-1}}$ as $\be$ goes to $+\8$.
\end{proof}

\begin{remark}
For $\al<1$ is also possible to show, following the same
procedure, that
$\disp \lim_{\be\to+\8}\frac1\be\log\frac{\mu([0^n*_0])}{\mu([1^n*_1])}=2-2 \al$.
\end{remark}

Lemma \ref{lem-sens-inverse} shows that the convergence and the study of selection for $\mu$ cannot be obtained
at the exponential scale. We thus must get more precise estimates.

\subsection{Estimation at the non-exponential scale}
We recall that the functions $F(P,\beta)$ and $F_{n}(P,\be)$ were defined in Definition \ref{def-FPbeta}.

\begin{lemma}\label{lem-formule-H}
For every $n\ge 1$ we get
\begin{eqnarray}
H(0^n*_0)&=&e^{(n-1)P-\frac\be{2^n}}\frac{(e^P-1)}{e^P+e^{-\al\be}}\left[e^{P+\be}H(\1)-\left(F_{n-2}(P,\beta)(1+e^{-P-\al\be})+e^{(1-\al)\be}\right)H(\0)\right]
,\nonumber\\
&&\label{equ-H0n}\\
H(1^n*_1)&=&e^{(n-1)P-\frac{3\be}{2^n}}\frac{(e^P-1)}{e^P+e^{-\al\be}}\left[e^{P+3\be}H(\0)-\left(F_{n-2}(P,3\beta)(1+e^{-P-\al\be})+e^{(3-\al)\be}\right)H(\1)\right],\nonumber\\
&&\label{equ-H1n}
\end{eqnarray}
where $F_{-1}\equiv 0$.
	\end{lemma}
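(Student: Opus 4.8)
The goal is to derive closed-form expressions for the eigen-function on the rings $[0^n*_0]$ and $[1^n*_1]$ in terms of its two ``boundary'' values $H(\0)$ and $H(\1)$. The natural starting point is the eigen-function equation $\CL H = e^P H$, which, written out at a point of the cylinder $[0^n*_0]$, expresses $H(0^n*_0)$ through the three preimages $H(0^{n+1}*_0)$, $H(1\,0^n*_0)$ and $H(2\,0^n*_0)$. By Lemma \ref{lem-H-loc const} the eigen-function is constant on each ring $[0^m*_0]$, $[1^m*_1]$ and on $[2]$, so all these values are well-defined constants and the equation becomes a genuine linear recursion in the index $n$. The first step is thus to write, using the explicit form of $A$ and the distances $d(0^{n}*_0,\0)=2^{-n}$, the recursion
\begin{equation*}
e^P\,H(0^n*_0)=e^{-\frac{\be}{2^{n+1}}}H(0^{n+1}*_0)+e^{-\be}H(1^{?})+e^{-\al\be}H(2),
\end{equation*}
and its counterpart on the $1$-side, being careful that the ``$1$'' and ``$2$'' preimages of a point in $[0^n*_0]$ land in the rings $[1*_1]$ and $[2]$ respectively (so they contribute the fixed constants $H(1*_1)$ and $H(2)$, not $n$-dependent terms).

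Second, I would solve this recursion explicitly. The homogeneous part has the factor $e^{(n-1)P}$ together with the telescoping exponential coming from $\sum_{k}\be/2^{k+1}$, which is exactly what produces the prefactor $e^{(n-1)P-\be/2^n}$ and the partial sums $F_{n-2}(P,\beta)$ appearing in \eqref{equ-H0n}. Concretely, one divides by the homogeneous solution, telescopes, and recognizes the accumulated inhomogeneous contributions (the $H(2)$ and cross terms) as the defining sum of $F_{n-2}$. The constant $H(2)$ must first be eliminated: the eigen-function equation at the point $2^\infty$ (or on $[2]$) gives a relation expressing $H(2)$ in terms of $H(\0)$ and $H(\1)$, and this is where the factor $\frac{e^P-1}{e^P+e^{-\al\be}}$ and the $(1+e^{-P-\al\be})$ grouping enter. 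I would substitute this relation to clear $H(2)$ and collect the coefficients of $H(\0)$ and $H(\1)$.

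Third, the two formulas \eqref{equ-H0n} and \eqref{equ-H1n} are symmetric under the exchange $0\leftrightarrow1$ accompanied by the slope change $\be\mapsto 3\be$ (equivalently $d(x,\0)\mapsto 3\,d(x,\1)$), so the $1$-side formula follows from the $0$-side computation by this substitution, and I would only carry out one side in detail and invoke the symmetry for the other. Finally one checks the base cases $n=1$ and $n=2$ by hand against the convention $F_{-1}\equiv0$, to confirm that the partial-sum indexing is correct.

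\textbf{Main obstacle.}
The delicate point is the bookkeeping of the inhomogeneous terms when unrolling the recursion: tracking which preimage contributes a constant ($H(1*_1)$, $H(2)$) versus an $n$-shifted ring value, getting the exponents $\be/2^{k+1}$ to telescope correctly into $F_{n-2}$ rather than $F_{n-1}$ or $F_n$, and correctly eliminating $H(2)$ so that the common factor $\frac{e^P-1}{e^P+e^{-\al\be}}$ emerges cleanly. This is pure (if intricate) linear algebra with no conceptual difficulty, but it is exactly where an off-by-one in the index or a misplaced factor of $e^{-P}$ would corrupt the formula, so I would verify the final expression against the low-$n$ cases as a consistency check.
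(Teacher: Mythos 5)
Your plan is essentially the paper's proof: the same $3\times3$ linear system obtained from $\CL H=e^PH$ evaluated at $\0$, at $\1$ and on $[2]$ (which eliminates $H(2)$ and produces the factor $\frac{e^P-1}{e^P+e^{-\al\be}}$), followed by unrolling the one-step recursion $e^PH(0^n*_0)=e^{-\be/2^{n+1}}H(0^{n+1}*_0)+e^{-3\be/2}H(1*_1)+e^{-\al\be}H(2)$ whose telescoped inhomogeneous terms yield exactly $F_{n-2}$, with the $1$-side obtained by the $0\leftrightarrow1$, $\be\mapsto3\be$ symmetry. The only correction: the coefficient of the $[1*_1]$-preimage in your displayed recursion must be $e^{-3\be/2}$ (since $A=-3\,d(\cdot,\1)$ on $[1]$ and $d(1x,\1)=\tfrac12$), not $e^{-\be}$.
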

\begin{proof}
Using the equality $\CL(H)=e^PH$ we get the following system of equations
\begin{equation}\label{equ-systeme-H}
\left\{
\begin{array}{rrrcl}
	 e^{-\bmeia}H(0*_0)&&+e^{-\al\be}H(2)&=&(e^P-1)H(\1),\\
	 &e^{-\tbmeia}H(1*_1)&+e^{-\al\be}H(2)&=&(e^P-1)H(\0),\\
	 e^{-\bmeia}H(0*_0)&+e^{-\tbmeia}H(1*_1)&+(e^{-\al\be}-e^P)H(2)&=&0.\\
\end{array}
\right.
\end{equation}
Solving this system in terms of $H(\1)$ and $H(\0)$ we find:
\begin{eqnarray}
H(0*_0)&=&e^{\bmeia}\frac{(e^P-1)}{e^P+e^{-\al\be}}\left[e^PH(\1)-e^{-\al\be}H(\0)\right]
\label{equ1-h0*}\\
H(1*_1)&=&e^{\tbmeia}\frac{(e^P-1)}{e^P+e^{-\al\be}}\left[e^PH(\0)-e^{-\al\be}H(\1)\right]
\label{equ1-h1*}
\end{eqnarray}
Again, the equality $\CL(H)=e^p H$ yields
$$e^PH(0^n*_0)=e^{-\frac\be{2^{n+1}}}H(0^{n+1}*_0)+e^{-\tbmeia}H(1*_1)+e^{-\al\be}H(2).$$
Introducing  the second equation in \eqref{equ-systeme-H}, we get
$$H(0^{n+1}*_0)=e^{P+\frac\be{2^{n+1}}}H(0^n*_0)+e^{\frac\be{2^{n+1}}}(e^P-1)H(\0).$$
By induction, we get for every $n\ge 2$ an expression of $H(0^n*_0)$ in function of $H(\0)$ and $H(0*_0)$. Then, introducing \eqref{equ1-h0*} in this expression, we let the reader check that we get \eqref{equ-H0n}. The proof of \eqref{equ-H1n} is similar.
\end{proof}
As we said above, the exponential scale is not sufficient to determine the limit
and the selection for the Gibbs measure. Due to the values of the subactions, the good
parameter to estimate is $\disp e^\be\frac{H(\0)}{H(\1)}$. Lemma \ref{lem-formule-H}
allows us to solve that problem.

\begin{proposition}\label{prop-cv-H0H1}
\begin{itemize} As $\be$ goes to $+\8$ we get the following limits:
\item[(i)]
if $\alpha>1$, then,
$\disp \lim_{\beta\to+\8}e^\beta \frac{H(\0)}{H(\1)}=1$,
\item[(ii)]
if $\alpha=1$, then,
$\disp \lim_{\beta\to+\8}e^\beta \frac{H(\0)}{H(\1)}=\frac{1+ \sqrt{5}}{2}$,
\item[(iii)]
if $0<\alpha<1$, then,
$\disp \lim_{\beta\to+\8}e^{\beta}  \frac{H(\0)}{H(\1)}=+\infty$.

\end{itemize}
\end{proposition}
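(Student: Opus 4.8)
The plan is to turn the formulas of Lemma~\ref{lem-formule-H} into an \emph{exact} relation between $H(\0)$, $H(\1)$ and the pressure, by exploiting the continuity of the eigen-function. The key observation is that the functional equations for $H$ determine $H$ on every ring $[0^n*_0]$, $[1^n*_1]$ and on $[2]$ as explicit functions of the two boundary values $H(\0)$ and $H(\1)$ (this is precisely the content of Lemma~\ref{lem-formule-H}), but they leave the ratio $H(\0)/H(\1)$ undetermined: substituting the formulas \eqref{equ1-h0*}--\eqref{equ1-h1*} back into the system \eqref{equ-systeme-H} merely returns an identity. The missing equation must therefore come from continuity. Fix $\be>0$, so that $P=P(\be)>0$ and the prefactor $e^{(n-1)P}$ in \eqref{equ-H0n} tends to $+\8$ as $n\to+\8$, while the remaining factors $e^{-\be/2^n}$ and $\frac{e^P-1}{e^P+e^{-\al\be}}$ converge to finite positive numbers. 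Since the cylinders $[0^n*_0]$ shrink to $\0$ and $H$ is constant on each of them (Lemma~\ref{lem-H-loc const}), continuity gives $H(0^n*_0)\to H(\0)$, a finite positive limit. Writing $H(0^n*_0)=C_n B_n$ with $C_n\to+\8$ forces $B_n\to0$; but $B_n\to B_\infty$ because $F_{n-2}(P,\be)\to F(P,\be)$, so $B_\infty=0$. This yields the identity
\[
e^{P+\be}\,H(\1)=\Bigl(F(P,\be)\,(1+e^{-P-\al\be})+e^{(1-\al)\be}\Bigr)\,H(\0),
\]
together with its companion (obtained from \eqref{equ-H1n} with $\be\mapsto 3\be$ and $\0,\1$ exchanged), which is only needed as a consistency check.

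From the displayed identity I read off directly
\[
e^\be\,\frac{H(\0)}{H(\1)}=\frac{e^{P+2\be}}{F(P,\be)\,(1+e^{-P-\al\be})+e^{(1-\al)\be}},
\]
so the whole statement reduces to the asymptotics of the denominator. Here I would invoke Lemma~\ref{lem-FZbeta} in the sharp form $P\,F(P,\be)\to1$: its error term is of order $\be e^{\be/2}$, which is negligible against $1/P$ in every case, since $1/P$ has exponential order $e^{2\be}$ for $\al\ge1$ (Lemma~\ref{lem-vitesse-fine-P}) and $e^{(1+\al)\be}$ for $\al<1$ (Proposition~\ref{prop-VetP}), and $\min(2,1+\al)>\tfrac12$. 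Moreover $1+e^{-P-\al\be}\to1$, so the denominator behaves like $F(P,\be)\sim 1/P$ plus the term $e^{(1-\al)\be}$, and everything hinges on comparing these two.

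It then remains to run the three cases. For $\al>1$ the term $e^{(1-\al)\be}$ vanishes, the denominator is $\sim 1/P$, and $e^\be H(\0)/H(\1)\sim e^{2\be}/(1/P)=Pe^{2\be}\to1$ by Lemma~\ref{lem-vitesse-fine-P}, giving (i). For $\al=1$ the term equals $1$, still negligible against $1/P\sim e^{2\be}$, so again $e^\be H(\0)/H(\1)\sim Pe^{2\be}$, which now tends to $\frac{1+\sqrt5}{2}$ by the refined asymptotics of Lemma~\ref{lem-vitesse-fine-P}; this is exactly where the golden mean enters, through the fine behaviour of $P$. For $0<\al<1$ one has $1+\al>1-\al$, so $1/P\sim e^{(1+\al)\be}$ dominates $e^{(1-\al)\be}$ and $e^\be H(\0)/H(\1)\sim e^{2\be}/e^{(1+\al)\be}=e^{(1-\al)\be}\to+\8$, giving (iii).

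I expect the main obstacle to be securing the exact identity in the first paragraph, rather than the final estimates. The delicate point is conceptual: the eigen-function equations by themselves are circular and do \emph{not} pin down $H(\0)/H(\1)$, and one must recognize that the genuine extra constraint is the boundedness/continuity of $H$ along the concentric rings $[0^n*_0]$, encoded in the vanishing of the bracket $B_\infty$. Once this relation is in hand, the remaining work is routine bookkeeping with the already-established asymptotics of $P$ (Proposition~\ref{prop-VetP}, Lemma~\ref{lem-vitesse-fine-P}) and of $F$ (Lemma~\ref{lem-FZbeta}), the only care needed being the verification that in each regime $1/P$ genuinely dominates both the error term $\be e^{\be/2}$ and the competing term $e^{(1-\al)\be}$.
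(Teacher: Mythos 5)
Your proof is correct, and it takes a genuinely different route from the paper's, though both rest on the same inputs: Lemma \ref{lem-formule-H}, the continuity of $H$ along the rings $[0^n*_0]$, and the asymptotics of $P$ and $F$. The paper forms the ratio $e^{\be(1-1/2^{n-1})}H(0^n*_0)/H(1^n*_1)$ from the two formulas of Lemma \ref{lem-formule-H}, lets $n\to+\8$, and obtains the fixed-point equation $x=(a+bx)/(dx+c)$ for $x=e^{\be}H(\0)/H(\1)$, which it solves as a quadratic; there the golden mean comes out of $\sqrt{(b-c)^2+4ad}$ with $b-c\to1$. You instead use the divergence of the prefactor $e^{(n-1)P}$ (this needs $P(\be)>0$, which holds because the equilibrium state is not a maximizing measure, and should be said explicitly) together with $H(0^n*_0)\to H(\0)<+\8$ to force the bracket of \eqref{equ-H0n} to vanish, yielding the exact linear relation $e^{P+\be}H(\1)=\bigl(F(P,\be)(1+e^{-P-\al\be})+e^{(1-\al)\be}\bigr)H(\0)$ and a closed formula for $x$; the three limits then follow from Lemma \ref{lem-FZbeta}, Proposition \ref{prop-VetP} and Lemma \ref{lem-vitesse-fine-P} exactly as you describe, with the golden mean entering only through $\lim Pe^{2\be}=\rho$. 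Your route buys a linear rather than quadratic determination of $x$, and it also clarifies a delicate point in the paper's argument: your identity together with its companion from \eqref{equ-H1n} shows that \emph{both} $a+bx$ and $dx+c$ vanish, so the paper's limiting M\"obius equation is formally of the form $0/0$ (the ratio of the limits is not a priori the limit of the ratio). The paper's quadratic $dx^2+(c-b)x-a=x(dx+c)-(a+bx)=0$ is nonetheless satisfied, and since $ad=bc$ its positive root equals $-a/b=-c/d$, which is exactly your value, so the two answers agree. The only items to make explicit in a final write-up are the strict positivity of $P(\be)$ and the finiteness and positivity of $H(\0)$, both standard.
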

\begin{proof}
Equalities \eqref{equ-H0n} and \eqref{equ-H1n} yield for any fixed $n$
\begin{equation}\label{eq1}
e^{\beta-\frac{\beta}{2^{n-1} }}\,\frac{H(0^n*_0) }{H(1^n*_1)}=\frac{e^P - [ \,F_{n-2} (P,\beta)\, (1+ e^{-P-\alpha\, \beta} )\,
 e^{-2 \beta}\,  + e^{-(1+\alpha) \beta}\,] \,(e^{\beta}\,  \frac{H(0) }{H(1)})  }
{  e^P \,(e^{\beta}\,  \frac{H(0) }{H(1)})\,- \,[ \,F_{n-2} (P,3\beta)\,
 (1+ e^{-P-\alpha\, \beta} )\, e^{-2 \beta}\,  + e^{(1-\alpha) \beta}\,]   }.
\end{equation}

For, $\beta$ fixed,  we set $x =x_\beta=e^{\beta}\,  \frac{H(0) }{H(1)}$. Then, taking the limit as $n$ goes to $+\infty$ we get
$$
 x=
\frac{e^P - [ \,F(P,\beta)\, (1+ e^{-P-\alpha\, \beta} )\,
 e^{-2 \beta}\,  + e^{-(1+\alpha) \beta}\,] \,x }
 { e^P \,x\,- \,[ \,F(P,3\beta)\, (1+ e^{-P-\alpha\, \beta} )\,
 e^{-2 \beta}\,  + e^{(1-\alpha) \beta}\,]   },
$$
(the eigen-function is continuous).
Let us set $a=d=e^P$ and
$$
 b= \,-\, [ \,F (P,\beta)\, (1+ e^{-P-\alpha\, \beta} )\,
   e^{-2 \beta}\,  + e^{-(1+\alpha) \beta}\,],
$$
$$
 c=\,-\, [ \,F (P,3\beta)\, (1+ e^{-P-\alpha\, \beta} )\,
  e^{-2 \beta}\,  + e^{(1-\alpha) \beta}\,].
$$

We can write the above equation in the form
$$
	x= \frac{a + b \,x}{d\,x+c}.
$$
 As $x$ is positive we can solve this equation and we get

\begin{equation}\label{equ-xabcd}
 x= \frac{(b-c)\, +\, \sqrt{ (c-b)^2 + 4\, a\, d}   }{ 2\,d     }.
\end{equation}

Note that
$$
 (b-c) = (\,F(P,3 \beta) -F(P, \beta)\,)\,\, e^{-2\, \beta} \,
 (1+ e^{-P-\alpha\,\beta} )+ e^{-\alpha \,\beta} \, (e^\beta- e^{-\beta}).
$$
Now, Lemma \ref{lem-FZbeta} shows that
$\disp e^{-\, 2\, \beta} \, ( \,F(P,3\, \beta) - F(P,\beta)\,)\to 0$
when $\beta$ goes to $+\infty$. On the other hand we get,
\begin{description}
	\item[] for $\alpha>1$,
$\disp e^{-\alpha \,\beta} \, (e^\beta- e^{-\beta})\to 0.
$
\item[] for $\alpha<1$,
$\disp e^{-\alpha \,\beta} \, (e^\beta- e^{-\beta})\to +\infty$,
\item[] for $\alpha=1$,
$\disp e^{-\alpha \,\beta} \, (e^\beta- e^{-\beta})\to 1$,
\end{description}
these three limits hold as $\beta$ goes to $+\8$.
From this, we get that for the three cases of possible values of  $\alpha$,
the corresponding limits for $(b-c)$ are the same:
\begin{description}
	\item[] for $\alpha>1$,
$b-c\to 0.
$
\item[] for $\alpha<1$,
$b-c\to +\infty$,
\item[] for $\alpha=1$,
$b-c\to 1$.
\end{description}

Finally, from this we get that for $\alpha>1$,
$$
 \lim_{\beta\to+\8}e^{\beta}\,  \frac{H(\0) }{H(\1)}= 1,
$$

 for $\alpha=1$,
$$
 \lim_{\beta\to+\8}e^{\beta}\,  \frac{H(\0) }{H(\1)}= \frac{1+ \sqrt{5}}{2},
$$
and for $0<\alpha<1$,
$$
 \lim_{\beta\to+\8}e^{\beta}\,  \frac{H(\0) }{H(\1)}=+ \infty .
$$
\end{proof}

\section{End of the proof of the Theorem}\label{sec-end-proof}
Now, we can finish the proof of our Main Theorem.
We recall that any accumulation point for $\mu_\be$ is a $A$-maximizing measure.
Hence, such an
accumulation point is a convex combination of the two Dirac measures
$\delta_{\0}$ and $\delta_{\1}$.
This convex combination can be found if we get an estimate for
$\disp\lim_{\be\to+\8}\frac{\mu([0])}{\mu([1])}$.
We get
\begin{eqnarray}
\frac{\mu([0])}{\mu([1])}&=&\frac{\sum_{n=1}^{+\8}\mu([0^n*_0])}{\sum_{n=1}^{+\8}\mu([1^n*_1])}\nonumber\\
&=&\frac{\sum_{n=1}^{+\8}H(0^n*_0)\nu([0^n*_0])}{\sum_{n=1}^{+\8}H(1^n*_1)\nu([1^n*_1])}\nonumber\\
&=&\frac{\sum_{n=1}^{+\8}H(0^n*_0)e^{-(n-1)\,P- \beta \,
(\frac{1}{2^2}+...+ \frac{1}{2^n})}}{\sum_{n=1}^{+\8}H(1^n*_1)e^{-(n-1)\,P- 3\beta \,
(\frac{1}{2^2}+...+ \frac{1}{2^n})}}\,\frac{\nu([0*_0])}{\nu([1*_1])}\nonumber\\
&=&\frac{\sum_{n=1}^{+\8}e^{\be(1-\frac1{2^{n-1}})}\frac{H(0^n*_0)}{H(1^n*_1)}\mu([1^n*_1])}{\sum_{n=1}^{+\8}\mu([1^n*_1])}\,\frac{\nu([0*_0])}{\nu([1*_1])}.
\label{eq1-serie-cv-rapport}
\end{eqnarray}

The proof will follow from the next technical lemma:
\begin{lemma}\label{lem-limratio-H0nH1n}
There exists $\beta_0$ such that for every $n\ge 3$, for every $\be\ge \beta_0$ and for every $\al$
$$\left|e^{\be(1-\frac1{2^{n-1}})}\frac{H(0^n*_0)}{H(1^n*_1)}\times \frac1{e^{\be}\frac{H(\0)}{H(\1)}}-1\right|\le e^{-\frac\be8}.$$
\end{lemma}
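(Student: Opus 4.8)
The plan is to start from the exact closed forms for $H(0^n*_0)$ and $H(1^n*_1)$ given by Lemma \ref{lem-formule-H}. Forming their quotient, the common prefactors $e^{(n-1)P}$ and $(e^P-1)/(e^P+e^{-\al\be})$ cancel, and the surviving powers of $2$ combine with the factor $e^{\be(1-1/2^{n-1})}$ to produce, exactly as in \eqref{eq1}, the identity
\begin{equation*}
e^{\be(1-\frac1{2^{n-1}})}\frac{H(0^n*_0)}{H(1^n*_1)}=\frac{a+b_nx}{d\,x+c_n},
\end{equation*}
where $x=x_\be:=e^\be H(\0)/H(\1)$, $a=d=e^P$, $b,c$ are the quantities from the proof of Proposition \ref{prop-cv-H0H1}, and $b_n,c_n$ are their finite-$n$ analogues obtained by replacing $F$ with $F_{n-2}$. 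The fact I would exploit is that the full-$F$ limit $x=(a+bx)/(dx+c)$ is satisfied \emph{exactly} by $x_\be$ (it is the $n\to\infty$ form of the same identity, using continuity of $H$), so that $x(dx+c)=a+bx$ and hence $dx^2=a+(b-c)x$.

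With this identity in hand I would compute the bracket of the lemma directly. Writing it as $\frac1x\cdot\frac{a+b_nx}{dx+c_n}-1$ and substituting $dx^2=a+(b-c)x$ to eliminate the $a$ and $x^2$ terms, the numerator collapses to $[(b_n-b)-(c_n-c)]x$, yielding the clean formula
\begin{equation*}
\frac1x\cdot\frac{a+b_nx}{dx+c_n}-1=\frac{(b_n-b)-(c_n-c)}{d\,x+c_n}.
\end{equation*}
Everything thus reduces to estimating a single quotient: a small numerator over the denominator $dx+c_n$.

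For the numerator, $b_n,c_n$ differ from $b,c$ only through the tails $F(P,\be)-F_{n-2}(P,\be)=\sum_{k\ge n-1}e^{-kP}e^{\be/2^{k+1}}$, so that
\begin{equation*}
(b_n-b)-(c_n-c)=(1+e^{-P-\al\be})\,e^{-2\be}\sum_{k\ge n-1}e^{-kP}\bigl(e^{\be/2^{k+1}}-e^{3\be/2^{k+1}}\bigr).
\end{equation*}
Using $e^{3t}-e^{t}\le 2t\,e^{3t}$ with $t=\be/2^{k+1}$ and the hypothesis $n\ge3$ (so $k\ge2$ and $\be/2^{k+1}\le\be/8$), the sum is dominated by its first term, and one gets a bound of the form $C\be\,e^{-2\be}e^{3\be/8}=C\be\,e^{-13\be/8}$, uniformly in $n\ge3$ and in $\al$; for larger $n$ it only improves.

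The crux, and the step I expect to be hardest, is a lower bound on $|dx+c_n|$. Since $c_n\ge c$ (the discarded tail is nonnegative), I would reduce to bounding $dx+c$, working through the explicit root \eqref{equ-xabcd}; rationalisation gives $dx+c=2(ad-bc)/\bigl(\sqrt{(b-c)^2+4ad}-(b+c)\bigr)$, whose denominator is at most $2(|b|+|c|+e^P)$, so the task becomes a quantitative lower bound for $|ad-bc|$. This must be done separately in the three regimes of Proposition \ref{prop-cv-H0H1}, using Proposition \ref{prop-VetP} and Lemma \ref{lem-vitesse-fine-P} for the size of $P$, and Lemma \ref{lem-FZbeta} for $F(P,\be)e^{-2\be}$ and $F(P,3\be)e^{-2\be}$. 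The genuine difficulty is that for $\al\ge1$ the terms $e^{2P}$, $F(P,\be)F(P,3\be)e^{-4\be}$ and $F(P,\be)e^{-2\be}\cdot e^{(1-\al)\be}$ all approach comparable limits and $ad-bc\to0$, so one must expand $e^{2P}-F(P,\be)F(P,3\be)(1+e^{-P-\al\be})^2e^{-4\be}$ to sufficient order and rule out an accidental cancellation that would drive $|dx+c_n|$ below $e^{-3\be/2}$; concretely this means sharpening Lemma \ref{lem-vitesse-fine-P} to a genuine \emph{rate} of convergence of $Pe^{2\be}$. Granting a bound $|dx+c_n|\ge e^{-c_1\be}$ with $c_1<3/2$ in every regime, the numerator estimate gives an error at most $C\be\,e^{-13\be/8+c_1\be}$; since $-13/8+c_1<-1/8$ strictly, choosing $\be_0$ large enough absorbs the polynomial factor $C\be$ and leaves the stated bound $e^{-\be/8}$.
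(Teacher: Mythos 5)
Your algebraic reduction is correct and coincides with the paper's own starting point: with $x=e^{\be}H(\0)/H(\1)$ the error equals
$$\frac{(b_n-b)-(c_n-c)}{dx+c_n}=\frac{\Delta_{n-1}\,(1+e^{-P-\al\be})\,e^{-2\be}}{dx+c_n},\qquad \Delta_{n-1}=R_{n-1}(1)-R_{n-1}(3),$$
where $R_{n-1}(j)=F(P,j\be)-F_{n-2}(P,j\be)$ is the tail; this is exactly Equation \eqref{eq3} divided by $x$. The proof breaks at the step you yourself flag as the crux, and it breaks irreparably along the route you propose. Writing $dx+c_n=(dx+c)+R_{n-1}(3)(1+e^{-P-\al\be})e^{-2\be}$ and discarding the tail, you want a lower bound on $dx+c$ through $|ad-bc|$. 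But $dx+c=0$ identically, equivalently $ad=bc$. Indeed, \eqref{equ-H1n} gives
$$\frac{H(1^n*_1)}{H(\1)}\cdot\frac{e^P+e^{-\al\be}}{e^P-1}\cdot e^{-(n-1)P+\frac{3\be}{2^n}-2\be}\;=\;dx+c_n,$$
and letting $n\to\8$ at fixed $\be$ the left side tends to $0$ (since $P(\be)>0$ forces $e^{-(n-1)P}\to0$ while $H(1^n*_1)\to H(\1)$ by continuity) whereas the right side tends to $dx+c$. Symmetrically $a+bx=0$, so $x=-a/b=-c/d$ and $ad=bc$; the relation $x=(a+bx)/(dx+c)$ is a $0/0$ identity whose only content is the quadratic $dx^2=(b-c)x+a$. (You can check numerically that $-a/b$ and $-c/d$ both converge to $1$ for $\al>1$ and to the golden mean for $\al=1$, consistent with Proposition \ref{prop-cv-H0H1}.) Consequently the bound $|dx+c_n|\ge e^{-c_1\be}$ you ``grant'' cannot be obtained from $dx+c$, and no sharpening of Lemma \ref{lem-vitesse-fine-P} will produce it: the quantity you propose to bound away from zero is zero.

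The repair is to keep the tail in the denominator rather than the $dx+c$ part. The displayed left side above is nonnegative, so $dx+c\ge0$ (this is all the paper uses) and hence $dx+c_n\ge R_{n-1}(3)(1+e^{-P-\al\be})e^{-2\be}$. The factor $(1+e^{-P-\al\be})e^{-2\be}$ then cancels against the identical factor in your numerator, leaving the bound $|\Delta_{n-1}|/R_{n-1}(3)$. Here your numerator estimate does the job once it is paired with the correct denominator: both tails carry the prefactor $e^{-(n-1)P}$, Lemma \ref{lem-FZbeta} gives $R_{n-1}(3)=e^{-(n-1)P}F(P,3\be/2^{n-1})$ of order $e^{-(n-1)P}/P$, while your term-by-term bound gives $|\Delta_{n-1}|\le e^{-(n-1)P}\,\be\,2^{-(n-2)}e^{3\be/2^{n}}$, so the quotient is of order $P(\be)\,\be\,2^{-n}e^{3\be/2^{n-1}}\le \be\,e^{-\be}e^{3\be/4}\le e^{-\be/8}$ for $n\ge3$ and $\be$ large, uniformly in $\al$ and $n$. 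This is the paper's argument.
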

\begin{proof}
We re-employ notations from the proof of Proposition \ref{prop-cv-H0H1}.
We denote by $R_{n-1}(1)$ the tail
$$
 R_{n-1}(1)=F(P,\beta) - F_{n-2}(P, \beta)=
\sum_{k=n-1}^\infty
e^{-k\, P\ + \frac{\beta}{2^{k+1}}},
$$

$R_{n-1}(3)$ the tail
$$
 R_{n-1}(3)=F(P,3\,\beta) - F_{n-2}(P,3\, \beta)=
\sum_{k=n-1}^\infty e^{-k\, P\ +
\frac{3\beta}{2^{k+1}}}
$$

and
$$
 \Delta_{n-1}=R_{n-1}(1)-R_{n-1}(3)
= e^{-(n-1) \, P}\,(
e^{\frac{\beta}{2^{n}}}-
e^{\frac{3\,\beta}{2^{n}}  }   )+...\,\,.
$$
Then,
\begin{eqnarray}
e^{\beta-\frac{\beta}{2^{n-1} }}\,\frac{H(0^n*_0) }{H(1^n*_1)}&=&
 \frac{ a + b x + x \Delta_{n-1} e^{-2 \beta}
 (1 + e^{-P-\alpha\, \beta} )+ x R_{n-1} (3)
e^{- 2\beta}(1 + e^{-P-\alpha\, \beta}  )}
{c+dx +  R_{n-1} (3)
e^{- 2\beta}(1 + e^{-P-\alpha\, \beta}  )}\nonumber\\
&=&
\label{eq3}
x + \frac{x \Delta_{n-1} e^{-2 \beta}
(1 + e^{-P-\alpha\, \beta} )}{c+dx +  R_{n-1} (3)\,
e^{- 2\beta}(1 + e^{-P-\alpha\, \beta}  )}.
\end{eqnarray}
Remember that by definition we have $\disp x=e^{\be}\frac{H(\0)}{H(\1)}$.
Now Equation \eqref{equ-H1n} yields
$$\frac{H(1^n*_1)}{H(\1)}\frac{e^P+e^{-\al\be}}{(e^P-1)}e^{-(n-1)P+\frac{3\be}{2^n}-2\be}=dx+c+  R_{n-1} (3)
e^{- 2\beta}(1 + e^{-P-\alpha\, \beta}  ).$$
If $n$ goes to $+\8$ the right hand side term of this equality goes to $c+dx$. On the other side it is always non-negative. This shows that $c+dx$ is always non-negative.
Therefore \eqref{eq3} yields
$$\left|e^{\be(1-\frac1{2^{n-1}})}\frac{H(0^n*_0)}{H(1^n*_1)}\times \frac1{e^{\be}\frac{H(\0)}{H(\1)}}-1\right|\le \frac{\left|\Delta_{n-1}\right|}{R_{n-1}(3)}.$$
Now, note that $R_{n-1}(1)=F(P,\frac\beta{2^{n-1}})$ and $R_{n-1}(3)=F(P,\frac{3\beta}{2^{n-1}})$. Then, Lemma \ref{lem-FZbeta} shows that $\disp \frac{\left|\Delta_{n-1}\right|}{R_{n-1}(3)}$ is of order $\disp P(\beta)\frac\beta{2^n}e^{\frac{3\beta}{2^{n-1}}}$. Remember that $P$ converges to 0 at least in $e^{-\beta}$.
For $n\ge 3$ and for $\beta$ sufficiently big, $\disp P(\beta)\frac\beta{2^n}e^{\frac{3\beta}{2^{n-1}}}$ is lower than $e^{-\frac\be8}$.
\end{proof}

Now Equation \eqref{eq1-serie-cv-rapport} and Lemma \ref{lem-limratio-H0nH1n} show that we get for every $\beta\ge \beta_0$
$$e^{\be}\frac{H(\0)}{H(\1)}(1-e^{-\frac\be8})\frac{\nu([0*_0])}{\nu([1*1])}\le
\frac{\mu([0])}{\mu([1])}\le e^{\be}\frac{H(\0)}{H(\1)}(1+e^{-\frac\be8})\frac{\nu([0*_0])}{\nu([1*1])},$$
(for $\beta$ big the terms $\mu([0^k*_0])$ and $\mu([1^k*_1])$, $k=1,2$ are very small since $\mu_\beta$ ``goes'' to a combination of $\delta_{\0}$ and $\delta_{\1}$). Then Corollary \ref{coro-lim-ratio-estrella} and Proposition \ref{prop-cv-H0H1} conclude the proof.

\end{document}